\def\E{{\mathbf E}}
\def\N{{\mathbf N}}
\def\R{\mathbf{R}}
\def\T{\mathbf{T}}
\def\Z{\mathbf{Z}}
\def\e{\bold e}
\def\w{\bold w}
\def\x{\bold x}
\def\y{\bold y}
\def\z{\bold z}
\def\St{\mathbb{S}}
\renewcommand{\R}{\mathbb{R}}
\renewcommand{\N}{\mathbb{N}}
\renewcommand{\T}{\mathbb{T}}
\renewcommand{\Z}{\mathbb{Z}}
\renewcommand{\E}{\mathbb{E}}
\newcommand{\cA}{\mathcal{A}}
\newcommand{\cC}{\mathcal{C}}
\newcommand{\cD}{\mathcal{D}}
\newcommand{\cH}{\mathcal{H}}
\newcommand{\cS}{\mathcal{S}}
\let\div\undefined
\DeclareMathOperator{\div}{div}
\newcommand{\1}{\mathds{1}}
\def\s{\mathfrak{s}}
\colorlet{darkblue}{blue!90!black}
\colorlet{darkred}{red!90!black}
\colorlet{darkgreen}{green!50!black}
\colorlet{darkyellow}{yellow!90!black}
\newcommand{\mb}[1]{{\mathbf #1}}
\newcommand{\bb}[1]{{\mathbb #1}}
\DeclareRobustCommand{\TitleEquation}[2]{\texorpdfstring{\StrLeft{\f@series}{1}[\@firstchar]$\if%
b\@firstchar\boldsymbol{#1}\else#1\fi$}{#2}}
\let\runtitle\@title
\date{%
    \today
    }
\begin{document}

\title{A scaling limit of the 2D parabolic Anderson model with exclusion interaction }

\author{Dirk Erhard$^1$, Martin Hairer$^2$, Tiecheng Xu$^1$}
\institute{Universidade Federal da Bahia, Brazil
    \and EPFL, Switzerland\\%
    \email{erharddirk@gmail.com, 
martin.hairer@epfl.ch,\\
xutcmath@gmail.com}
}

\maketitle

\begin{abstract}
We consider the (discrete) parabolic Anderson model $\partial u(t,x)/\partial t=\Delta u(t,x) +\xi_t(x) u(t,x)$, $t\geq 0$, $x\in \Z^d$. Here, the $\xi$-field is $\R$-valued, acting as a dynamic random environment, and $\Delta$ represents the discrete Laplacian. We focus on the case where $\xi$ is given by a rescaled  symmetric simple exclusion process which converges to an Ornstein--Uhlenbeck process. 
By scaling the Laplacian diffusively and considering the equation on a torus, we demonstrate that in dimension $d=2$, when a suitably renormalized version of the above equation is considered, the sequence of solutions converges in law. This resolves an open problem from~\cite{EH23}, where a similar result was shown in the three-dimensional case. The novel contribution in the present work is the establishment of a gradient bound on the transition probability of a fixed but arbitrary number of labelled exclusion particles.

\medskip\noindent
{\it Keywords.}
Simple exclusion process, parabolic Anderson model, stochastic PDE
\end{abstract}

%\tableofcontents

\maketitle

\section{Introduction}
The (discrete) parabolic Anderson model is the partial differential equation
\begin{equ} 
	\label{eq:PAM}
	\partial_t u (x,t)=(\Delta u)(x,t) - \xi_t(x)u(x,t) \;.
\end{equ}
Here $x\in \Z^d$, $t\geq 0$, and $\Delta$ is the discrete Laplacian acting on $u$ as
\begin{equ}[e:delta]
	\Delta u(x,t)= \sum_{y:\, x\sim y}[u(y,t)-u(x,t)],
\end{equ}
where $x\sim y$ means that $x$ and $y$ are nearest neighbours with respect to the Euclidean norm on $\Z^d$, while $\xi$ is an $\R$-valued random field that plays the role of a dynamic random environment. We focus on the case in which $\xi$ is given by the symmetric simple exclusion process, i.e., $\xi=(\xi_t)_{t\geq 0}$ is the Markov process taking values in $\cH=\{0,1\}^{\Z^d}$ with generator $L$ acting on local functions 
$f\in\R^{\cH}$ via
\begin{equation}
	\label{eq:genex}
	(Lf)(\eta) = \sum_{x\sim y}
	[f(\eta^{x,y})-f(\eta)].
\end{equation}
Here, $\eta^{x,y}$ is defined as usual by $\eta^{x,y}(y) = x$, $\eta^{x,y}(x) = y$,
and the identity otherwise.
Informally, \eqref{eq:genex} says that neighbouring states are swapped independently at 
rate~$1$. 
For $\rho\in (0,1)$, denote by $\nu_{\rho}$ the Bernoulli product measure on $\Z^d$ with parameter $\rho$. It is well known that $(\nu_{\rho})_{\rho \in [0,1]}$ forms a family of invariant measures for $\xi$ under which the dynamics is reversible. We assume throughout this article that $\xi_0\sim\nu_\rho$ for some fixed value of $\rho$.
Denote by $\cS(\R^d)$ the space of Schwartz functions and, for $N\in\N$, define the fluctuation field $Y^N$ of $\xi$ via
\begin{equation}
	Y_t^N(f)= 2^{-Nd}\sum_{x\in\Z^d}f(x/2^N)2^{Nd/2}[\xi_{t2^{2N}}(x)-\rho].
\end{equation}
It was shown in \cite{Ravi92} that for all $T>0$ the process $Y^N$ converges in law with respect to the Skorokhod topology in the space $\cD([0,T], \cS'(\R^d))$ to the
stationary generalised Ornstein--Uhlenbeck process $Y$ solving
\begin{equ}[eq:Y]
	\partial_t Y = \Delta Y + \sqrt{2\rho(1-\rho)} \,\div \hat \xi\;,
\end{equ}
where $\hat \xi$ denotes a vector-valued space-time white noise.
%One readily verifies that $Y$ is centred Gaussian with covariance function
%\begin{equation}
%\label{eq:Y}
%\E[Y_t(f)Y_s(g)] = \frac{\rho(1-\rho)}{(4\pi|t-s|)^{d/2}}
%\int_{\R^d}\int_{\R^d} f(u)g(v)\exp\Big(-{\|u-v\|^2 \over 4|t-s|}\Big)\, du\, dv.
%\end{equation} 
This suggests that looking at the equation
\begin{equation}\label{eq:disunrenPAM}
	\partial_t\hat u^N(t,x)=(\Delta_N \hat u^N)(t,x)-2^{Nd/2}\bar\xi^N_{t}(x)\hat u^N(t,x),
\end{equation}
where $\Delta_N =2^{2N}\Delta$ is the rescaled discrete Laplacian acting on functions defined on $2^{-N}\Z^d$, $x\in2^{-N}\Z^d$, $t\geq 0$, $\xi^N_t(x)=\xi_{2^{2N}t}(2^Nx)$, and $\bar \xi^N_t(x)=\xi_{2^{2N}t}(2^Nx)-\rho$ is its centred version, its solutions $\hat u^N$ should converge to the solution $\hat u$ of
\begin{equation}\label{eq:unrenPAM}
	\partial_t \hat u(t,x)=(\Delta \hat u)(t,x)-Y_t(x) \hat u(t,x),\quad x\in\R^d,t\geq 0.
\end{equation}
Here, $\Delta$ denotes the usual continuous Laplacian. It is well known that in all dimensions $Y$ does not make sense as a random space-time function,
but as a random distribution taking values 
in the Besov--H\"{o}lder space $\cC^{-d/2-}(\R^{d+1};\R)$. We therefore see that as the dimension increases the regularity of $Y$ becomes worse. As a consequence, as noted already in~\cite{EH23}, the above equation is ill posed in $d \ge 2$. The problem is that the product 
between $Y$ and $\hat u$ is meaningless in the limit, so that one needs to resort to a renormalisation procedure and rather look at the sequence of equations
\begin{equ} 
	\label{eq:PAMN}
	\partial_t u^{N}(t,x)=\Delta_N u^{N}(t,x) - [2^{Nd/2}\bar\xi^N_{t}(x)-C_N]u^{N}(t,x),
\end{equ}
for a suitable choice of constants $C_N$ that tend to infinity. 
In~\cite{EH23} it was shown that this is indeed the correct approach in $d=3$. Define $\Z_N^d=(\Z/2^N\Z)^d$ and define the rescaled torus by $\T_N^d=2^{-N}\Z_N^d$ . The following result was shown in~\cite{EH23}.
\begin{theorem}
	\label{thm}
	Fix $T>0$. Let $d=3$ and let $(u_{0}^N)_{N\in\N}$ be a sequence of initial conditions such that
	there is $\eta\in (0,1)$ and $u_0\in\cC^\eta$ with
	\begin{equation}
		\lim_{N\to\infty}\|u_0;u_0^N\|_\eta=0\;.
	\end{equation} 
	Write $u^N$ for the solution to~\eqref{eq:PAMN}  defined on $\T_N^d$ 
	and $u$ for the renormalised solution to
	\begin{equ}
	\partial_t u(t,x)= (\Delta u)(t,x) -Y_t(x)u(t,x)
	\end{equ}
	on the three dimensional torus.
	Then, there is a diverging sequence of constants $C_N$ such that the sequence  $u^N$  converges
	in law to $u$ in $\cC_N^{\bar\eta,T}$ for all $\bar\eta \in (0,\frac12\wedge\eta)$. 
\end{theorem}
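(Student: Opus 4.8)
The plan is to recast both \eqref{eq:PAMN} and its continuum limit in the framework of (discrete) paracontrolled distributions on the torus. First I would set up, uniformly in $N$, a parabolic Besov scale $\cC_N^\alpha$ on $[0,T]\times\T_N^d$ built from the discrete heat semigroup generated by $\Delta_N$, together with the corresponding discrete Schauder and para-product estimates and the embeddings $\cC_N^\alpha\embed\cC^\alpha$ that let one compare discrete and continuum objects. The driving field is $V^N_t(x):=2^{Nd/2}\bar\xi^N_t(x)$; the first task is to show that $V^N$ is bounded in $\cC_N^{-d/2-\kappa}$ uniformly in $N$ (for small $\kappa>0$) and converges in law to the Ornstein--Uhlenbeck field $Y$ of \eqref{eq:Y}. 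The natural tool here is the martingale decomposition $Y^N_t(f)=Y^N_0(f)+\int_0^t Y^N_s(\Delta_N f)\,ds+M^N_t(f)$: its bracket is explicit and asymptotically deterministic, equal to the one driving \eqref{eq:Y}, which yields both the Besov bounds and the identification of the limit.

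Next I would introduce the first-order object $X^N$, the solution of $\partial_t X^N=\Delta_N X^N-V^N$ with, say, vanishing initial data, and its resonant enhancement, the renormalised product $X^N\circ V^N-C_N$, where $C_N\sim\E\big[(X^N\circ V^N)_t(x)\big]$ is precisely the divergent constant in \eqref{eq:PAMN} (in $d=3$ it tends to $\infty$). The claim is that the \emph{enhanced datum} $\Xi^N:=\big(V^N,\;X^N\circ V^N-C_N\big)$, together with whatever further finitely many objects the $d=3$ expansion requires, converges in law in the appropriate product Besov space to the corresponding continuum enhanced noise $\Xi$ of the renormalised 3D PAM. Granting this, a paracontrolled ansatz of the form $u^N=u^N\prec_N X^N+u^{N,\sharp}$ (iterated once more, as needed in $d=3$) turns \eqref{eq:PAMN} into a fixed-point equation for the remainder in a space $\cC_N^{1+\bar\eta,T}$, with all coefficients and data continuous functions of $(\Xi^N,u^N_0)$; the uniform Schauder and para-product bounds give a solution on $[0,T]$ together with continuity of the solution map, uniformly in $N$. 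Since the renormalised continuum equation has the same structure and a unique solution depending continuously on $(\Xi,u_0)$, the convergence $(\Xi^N,u^N_0)\Rightarrow(\Xi,u_0)$ propagates, via the continuous-mapping theorem (after a Skorokhod representation and using the embeddings above), to $u^N\Rightarrow u$ in $\cC_N^{\bar\eta,T}$.

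\textbf{Main obstacle.}
The technical heart, and the step I expect to be the main obstacle, is the stochastic convergence of $\Xi^N$ --- above all of the renormalised resonant product $X^N\circ V^N-C_N$. Because $\xi$ is an exclusion process rather than a Gaussian field, there is no Wiener chaos or hypercontractivity to upgrade second-moment bounds to all moments; one must instead estimate the $n$-point space-time correlation functions of $\bar\xi^N$ directly. By the self-duality of the symmetric simple exclusion process these reduce to transition probabilities of a bounded number of labelled exclusion particles, and to extract the cancellations and the decay needed when summing over the lattice in the near-critical regime one needs not only Gaussian-type heat-kernel upper bounds on these transition probabilities but also discrete-gradient estimates on them, uniform in the configuration of the remaining particles. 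Feeding such bounds into moment estimates for $X^N\circ V^N-C_N$ --- and checking that the diagonal contribution reproduces exactly $C_N$, that the off-diagonal contributions vanish, and that the fluctuations are asymptotically Gaussian with the correct covariance --- is the crux; the remaining arguments are routine within the now-standard discrete paracontrolled machinery.
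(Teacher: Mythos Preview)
The paper does not prove this theorem at all: it is quoted verbatim as a result established in~\cite{EH23}, and the present article's contribution is the $d=2$ extension (Theorem~\ref{thm:main}). So there is, strictly speaking, no ``paper's own proof'' to compare against beyond the pointer to~\cite{EH23} and the surrounding discussion.

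That said, your sketch is a plausible alternative route and differs from~\cite{EH23} in one structural respect: \cite{EH23} is written in the language of (discrete) regularity structures, not paracontrolled distributions. For the three-dimensional PAM the paracontrolled expansion must be iterated beyond the first order you describe; you acknowledge this only parenthetically (``iterated once more, as needed in $d=3$'', ``whatever further finitely many objects the $d=3$ expansion requires''), but this is where most of the analytic work would sit, and in the discrete setting the required commutator and Schauder lemmas uniform in $N$ are not entirely off-the-shelf. Your diagnosis of the probabilistic obstacle, on the other hand, matches the paper exactly: the non-Gaussianity of $\bar\xi^N$ forces one to control joint cumulants via duality with labelled exclusion particles, and the decisive input is a heat-kernel bound together with a discrete-gradient estimate on the transition probabilities $p_t^\ell$. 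In the paper's framework this is packaged as the cumulant bound of Theorem~\ref{thm:cycle}, which in turn reduces (via Remarks~1.4 and~3.35 of~\cite{EH23}) to the gradient estimate~\eqref{e:mainGradientBound}; once that is available, the deterministic machinery --- regularity structures there, paracontrolled here --- is indeed routine.
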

The spaces of functions and distances used in this statement will be defined in Section~\ref{s:functionspaces} below.
Its proof relies on the (discrete) theory of regularity structures \cite{ErhardHairerRegularity, Hai}, but does not generalise to the two-dimensional setting. This came as a 
surprise, since the theory of regularity structures should be applicable for $d\in\{2,3\}$ and the three-dimensional case should in principle be the harder one. Indeed, in all examples known to the authors
except the one above, the results in low dimensions are a more or less immediate consequences of the 
proof of the result in higher dimension.

The reason for this problem is as follows. The proof in~\cite{EH23} relies on a combination of deterministic and probabilistic arguments. The deterministic arguments are indeed more challenging in higher dimensions but the probabilistic arguments, which involve controlling certain functionals of the exclusion process, become more manageable as the dimension increases.
For the equation above, it was precisely in dimension three that the deterministic and probabilistic arguments worked simultaneously.

A key estimate for the proof of~\ref{thm} is the following result on the joint cumulants of the exclusion process.
\begin{theorem}
	\label{thm:cycle}
	Fix $d\geq 3$.
	Let $\xi$ be the simple symmetric exclusion process with fixed-time distribution that is Bernoulli with parameter $q\in (0,1)$. For $t\geq 0$ and $x\in \T_N^d$ define a rescaled version of $\xi$ via $\xi^N_t(x)= 2^{dN/2}\xi_{2^{2N}t}(2^Nx)$. Then, for every $k \ge 2$ there exists a constant $C$ that does not depend on $N$ such that, for 
	any collection $(t_i, x_i)_{i \in [k]}$ of $k$ space-time points, 
	the joint cumulant satisfies
	\begin{equation}\label{eq:cumubound}
		\E_c \{\xi(t_i,x_i)\}_{i \le k}
		\leq C\sum_{\sigma}\prod_{i \in [k]}(|t_{\sigma(i+1)} - t_{\sigma(i)}|^{\tfrac12}+ |x_{\sigma(i+1)} - x_{\sigma(i)}|\vee 2^{-N})^{-\tfrac{d}{2}},
	\end{equation}
		where the sum runs over all bijections $\sigma \colon [k] \to [k]$
	and we use the convention $\sigma_{k+1}=\sigma_1$.
\end{theorem}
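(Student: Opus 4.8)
\textbf{Proof strategy for Theorem~\ref{thm:cycle}.}
The plan is to reduce the cumulant bound to a statement about the transition probabilities of finitely many labelled exclusion particles, and then exploit the duality between the occupation-variable process $\xi$ and the dual particle system. First I would recall the self-duality of the symmetric simple exclusion process: for a finite set $A\subset\Z^d$, the function $\eta\mapsto\prod_{x\in A}\bar\eta(x)$ (with $\bar\eta(x)=\eta(x)-q$) evolves under the dual dynamics of $|A|$ labelled particles performing independent continuous-time random walks subject to the exclusion constraint. Writing the product of $\xi(t_i,x_i)$ in terms of these dual monomials and using the Möbius inversion that defines joint cumulants, the $k$-point cumulant can be expressed as an alternating sum over set partitions of expectations of such dual quantities, with the key feature that only \emph{connected} contributions survive — the disconnected terms cancel by the very definition of the cumulant. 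This is the standard cluster-expansion mechanism, and it is here that the sum over cyclic orderings $\sigma$ in~\eqref{eq:cumubound} will emerge: a connected diagram on $k$ vertices must contain a spanning tree, and after bounding we can dominate the tree-sum by the cyclic-permutation sum.

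The analytic heart of the argument is then a pointwise upper bound on the dual transition kernel. Concretely, I would establish that the probability for a tagged exclusion particle to travel from $2^Nx$ to $2^Ny$ in time $2^{2N}t$ is bounded, uniformly in $N$ and in the configuration of the other particles, by $C\,(t^{1/2}+|x-y|\vee 2^{-N})^{-d}$ — i.e.\ the usual Gaussian/heat-kernel off-diagonal decay on scale $2^{-N}$ — and crucially that the same holds for the \emph{difference} of two such particles, which is what a single factor in the product on the right-hand side of~\eqref{eq:cumubound} encodes. The factor $2^{dN/2}$ in each $\xi^N_t(x)$ contributes $2^{dN k/2}$ overall; after the duality rewriting each connected diagram on $k$ vertices carries $k$ heat-kernel factors but the relevant combinatorics pairs the $k$ scaling factors with $k$ kernel evaluations, each of which is $O(2^{dN/2})\times(t^{1/2}+\dots)^{-d/2}$ by the local CLT for the single-particle walk on scale $2^{-N}$, so that the $2^{dN/2}$ factors are exactly absorbed and one is left with the claimed product of $(|t_{\sigma(i+1)}-t_{\sigma(i)}|^{1/2}+|x_{\sigma(i+1)}-x_{\sigma(i)}|\vee2^{-N})^{-d/2}$.

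To make the off-diagonal kernel bound uniform in the presence of the exclusion interaction — which is the genuinely delicate point — I would use a comparison/coupling argument: one couples the exclusion system with a system of independent random walks in such a way that the relative displacements of tagged particles are controlled, or alternatively one appeals to the Nash–Aronson-type heat-kernel estimates for the exclusion generator, which are available in the literature for symmetric conservative dynamics. For the stationarity in time (the $|t_i-t_j|^{1/2}$ factors) one uses the Markov property to factor the multi-time cumulant through products of single-time-increment transitions, reordering the time points so that consecutive differences appear; the $\vee\,2^{-N}$ truncation is just the statement that on the lattice one cannot resolve spatial scales finer than the mesh, which is automatic from the local CLT.

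The main obstacle I expect is precisely the uniform-in-$N$, uniform-in-environment off-diagonal bound on the tagged-particle (and two-particle difference) transition probability for the exclusion process: unlike independent walks, the exclusion constraint correlates the particles, and one must show the constraint can only \emph{help} the decay, not hurt it. This is the ``gradient bound on the transition probability of a fixed but arbitrary number of labelled exclusion particles'' advertised in the abstract, and I would anticipate that controlling it — likely via a careful coupling that preserves ordering, or via a spectral-gap / entropy argument combined with Nash inequalities adapted to the labelled dynamics on $k$ particles — is where the bulk of the technical work, and the new contribution over~\cite{EH23}, resides. Everything else (the duality rewriting, the cumulant cancellation of disconnected terms, the tree-to-cycle combinatorial bound, the bookkeeping of the $2^{dNk/2}$ scaling factors) is, by comparison, routine.
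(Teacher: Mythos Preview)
Your high–level outline of the reduction (self–duality, cumulant combinatorics, tree–to–cycle bound, heat–kernel input) is in line with the machinery of~\cite{EH23}, and you correctly single out the gradient estimate on the labelled–exclusion transition kernel as the analytic crux. Note, however, that Theorem~\ref{thm:cycle} as stated ($d\ge 3$) is quoted from~\cite{EH23}; the present paper's contribution is the $d=2$ extension, and for that the paper does \emph{not} redo the combinatorial reduction but simply invokes~\cite{EH23} (Remarks~1.4 and~3.35 there) to reduce everything to the single inequality~\eqref{eq:toshow}, which in turn follows from the gradient bound of Theorem~\ref{thm:gradboundtorus}.

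Where your proposal diverges substantively is in the method for the gradient bound. You suggest either (i) a coupling preserving ordering, or (ii) Nash/spectral-gap/entropy machinery. The paper explicitly discards (i): the ordering coupling is the one–dimensional argument of~\cite{EFX23}, and the text states that it is not clear how to make it work for $d\ge 2$. Approach~(ii) is not used either. Instead, the paper compares $p_t^{\ell}$ with the \emph{independent} random–walk kernel $p_t^{rw}$ via the Duhamel identity~\eqref{formula}; Lemma~\ref{expression} computes $(L^{rw}-L^{\ell})$ explicitly, and Lemma~\ref{sumy} shows that $\sum_{\y}|p_t^{rw}-p_t^{\ell}|$ is $O(t^{-1/2}\log t)$ in $d=2$. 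Combined with the elementary gradient bound for independent walks (Proposition~\ref{sumgradrw}) this yields a spatially uniform gradient estimate (Theorem~\ref{unifbound}); the full off–diagonal bound of Theorem~\ref{main} then follows by geometric interpolation with the Landim kernel estimate~\cite{Landim05}. So the mechanism is ``perturb off the free case'' rather than ``coupling'' or ``functional inequalities''.

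A smaller point: you frame the needed input as a bound on a \emph{tagged} particle uniform in the environment of the others. That is not what is proved or used; the object controlled is the full $k$–particle kernel $p_t^{\ell}(\x,\y)$ on $\St^k$ and its discrete gradient in one coordinate, cf.~\eqref{e:mainGradientBound}. The heuristic ``exclusion can only help the decay'' is plausible for diagonal bounds but is not invoked for the gradient, and indeed the comparison argument does not rely on any monotonicity of this kind.
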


	As noted in~\cite{EH23}, since the right hand side in~\eqref{eq:cumubound} does not depend on $N$, a limiting procedure implies the following result.
\begin{theorem}\label{thm:cumu}
	Let $\xi$ denote the stationary symmetric simple
	exclusion process on $(t,x) \in \R\times \Z^d$ with $d \ge 3$ and $\E\xi(t,x) = \rho \in (0,1)$.
	Then, for every $k \ge 2$ there exists a constant $C$ that does not depend on $N$ such that, for 
	any collection $(t_i, x_i)_{i \in [k]}$ of $k$ space-time points, 
	the joint cumulant satisfies
	\begin{equ}\label{eq:cumu}
		\E_c \{\xi(t_i,x_i)\}_{i \le k} \le C\sum_{\sigma}\prod_{i \in [k]} \bigl(1 + |t_{\sigma_{i+1}} - t_{\sigma_i}|
		+ |x_{\sigma_{i+1}} - x_{\sigma_i}|^2\bigr)^{-d/4}\;,
	\end{equ}
	where the sum runs over all bijections $\sigma \colon [k] \to [k]$
	and we use the convention $\sigma_{k+1}=\sigma_1$.
\end{theorem}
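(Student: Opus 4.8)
The plan is to deduce \eqref{eq:cumu} from the discrete estimate \eqref{eq:cumubound} of Theorem~\ref{thm:cycle} by inverting the diffusive rescaling and then letting the torus size tend to infinity. Fix the space-time points $(t_i,x_i)_{i\in[k]}$ in $\R\times\Z^d$ and take $q=\rho$. For $N$ large let $\zeta^N$ denote the stationary symmetric simple exclusion process on the discrete torus $\Z_N^d=(\Z/2^N\Z)^d$, so that $\E\zeta^N_s(z)=\rho$ and the process $\xi^N$ of Theorem~\ref{thm:cycle} is $\xi^N_s(y)=2^{dN/2}\zeta^N_{2^{2N}s}(2^Ny)$ for $y\in\T_N^d$. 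First I would apply \eqref{eq:cumubound} at the rescaled points $s_i:=2^{-2N}t_i$, $y_i:=2^{-N}x_i$; for $N$ large these lie in $\T_N^d$ and their pairwise torus distances coincide with the corresponding Euclidean distances, so that $|s_{\sigma_{i+1}}-s_{\sigma_i}|^{\tfrac12}=2^{-N}|t_{\sigma_{i+1}}-t_{\sigma_i}|^{\tfrac12}$ and $|y_{\sigma_{i+1}}-y_{\sigma_i}|\vee 2^{-N}=2^{-N}(|x_{\sigma_{i+1}}-x_{\sigma_i}|\vee1)$. Since cumulants of order $k\ge2$ are multilinear and invariant under adding constants, $\E_c\{\zeta^N(t_i,x_i)\}_{i\le k}=2^{-dNk/2}\E_c\{\xi^N(s_i,y_i)\}_{i\le k}$, and the $k$ spatial factors $(2^{-N})^{-d/2}$ generated on the right-hand side of \eqref{eq:cumubound} cancel this $2^{-dNk/2}$ exactly, leaving
\begin{equ}
	\E_c\{\zeta^N(t_i,x_i)\}_{i\le k}\ \le\ C\sum_{\sigma}\prod_{i\in[k]}\bigl(|t_{\sigma_{i+1}}-t_{\sigma_i}|^{\tfrac12}+|x_{\sigma_{i+1}}-x_{\sigma_i}|\vee1\bigr)^{-\tfrac d2}
\end{equ}
with the \emph{same} constant $C$ as in Theorem~\ref{thm:cycle}, uniformly in all sufficiently large $N$. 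The elementary bound $\bigl(|t|^{1/2}+|x|\vee1\bigr)^{2}\ge\tfrac13(1+|t|+|x|^{2})$ (the left side exceeds each of $|t|$, $|x|^{2}$ and $1$) then converts this into the right-hand side of \eqref{eq:cumu}, up to the harmless factor $3^{d/4}$.

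It remains to pass to the limit $N\to\infty$ on the left. The plan is to show that the joint law of $(\zeta^N(t_i,x_i))_{i\le k}$ converges, as $N\to\infty$, to the joint law of $(\xi(t_i,x_i))_{i\le k}$ for the stationary exclusion process $\xi$ on $\R\times\Z^d$ of density $\rho$. After translating all $t_i$ by a common constant we may assume $t_i\ge0$, and run both dynamics from time $0$ started from the common invariant measure $\nu_\rho$. Realising $\zeta^N$ and $\xi$ on a single family of rate-one Poisson clocks on the edges (the graphical construction) and on a single Bernoulli initial field, the two evolutions agree at the finitely many sites $x_1,\dots,x_k$ throughout $[0,\max_i t_i]$ unless some chain of clock rings transports information around the torus, an event of probability tending to $0$ as $2^N\to\infty$. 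Hence the finite family converges in distribution; being $\{0,1\}$-valued, all its joint moments — and therefore the joint cumulant — converge, and the uniform estimate above survives in the limit, giving \eqref{eq:cumu}.

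The one step carrying genuine content is this last comparison of the torus dynamics with the full-lattice dynamics, but it is a standard finite-speed-of-propagation property of the graphical representation of the exclusion process (equivalently, it is built into the usual construction of the exclusion semigroup on $\Z^d$ as a limit of finite-volume dynamics), and for fixed $\max_i t_i$ the wrap-around probability actually decays super-polynomially in $2^N$. Everything else is bookkeeping; the only point worth flagging is the exact cancellation in the first step, where the factor $2^{-dNk/2}$ coming from the $k$-fold rescaling relating $\zeta^N$ to $\xi^N$ is matched precisely by the product of the $k$ spatial factors $(2^{-N})^{-d/2}$ in \eqref{eq:cumubound} — which is exactly what makes the resulting bound uniform in $N$.
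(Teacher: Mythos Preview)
Your proposal is correct and follows exactly the route the paper indicates: the paper does not give a detailed proof of Theorem~\ref{thm:cumu} at all, but simply remarks (citing~\cite{EH23}) that since the right-hand side of~\eqref{eq:cumubound} is independent of $N$, a limiting procedure yields~\eqref{eq:cumu}. You have spelled out precisely this limiting procedure---the rescaling that cancels the $2^{dNk/2}$ factor, the elementary conversion of the bound into the form~\eqref{eq:cumu}, and the graphical-construction coupling that passes from the torus process to the full-lattice process---so your argument is a faithful expansion of what the paper leaves implicit.
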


If Theorem~\ref{thm:cycle} were true in dimension $d=2$, then the methods in~\cite{EH23} would be 
sufficient to deduce Theorem~\ref{thm}. 
In this article we show that this is the case, more precisely we show the following:

\begin{theorem}\label{thm:main}
Theorem~\ref{thm} is true in dimension $d=2$ with the same choice of $\eta$ and with $\bar\eta\in(0,1\wedge \eta)$. Theorem~\ref{thm:cycle} is true in dimension $d=2$ as well.
	\end{theorem}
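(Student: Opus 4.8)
The plan is to reduce the first assertion to the second and then to prove the second. The reduction is essentially contained in~\cite{EH23}: once the cumulant bound of Theorem~\ref{thm:cycle} is available in dimension $d=2$, the proof of Theorem~\ref{thm} given there — which combines the deterministic estimates of the (discrete) theory of regularity structures with exactly this probabilistic input — carries over with only minor changes, the two-dimensional features entering essentially only through the improved regularity $Y\in\cC^{-1-}$ (versus $\cC^{-3/2-}$ in $d=3$), which is what widens the admissible exponent to $\bar\eta\in(0,1\wedge\eta)$. So everything comes down to establishing \eqref{eq:cumubound} for $d=2$.

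For Theorem~\ref{thm:cycle} in $d=2$ I would follow the route of~\cite{EH23}. One first expresses the multi-time joint cumulant of the rescaled occupation field in terms of transition probabilities of finitely many labelled exclusion particles: ordering the times and repeatedly invoking the Markov property together with the invariance and reversibility of the Bernoulli law, one transports the variables at the later times back to successively earlier time slices, each step producing a factor given by the transition kernel $p^{(m)}_t$ of an $m$-particle symmetric exclusion process with $2\le m\le k-1$. The virtue of the \emph{connected} correlation is that in the resulting expansion the fully factorised contributions cancel, so that what survives is a finite sum — indexed by diagrams whose skeleton is, after quasi-triangle inequalities for the distance in \eqref{eq:cumubound}, dominated by the cyclic structures on the right-hand side of \eqref{eq:cumubound} — of space-time convolutions of these particle kernels. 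In $d\ge 3$ the on-diagonal heat-kernel bound already makes these convolutions summable uniformly in $N$; in $d=2$ it does not, since the Green's function $\sum_t p^{(m)}_t$ is only logarithmically divergent, and one must extract from each kernel an additional discrete gradient, contributing the missing factor $t^{-1/2}$ that turns the divergent sum into a convergent one and reproduces the exponent $-d/2=-1$ per edge in \eqref{eq:cumubound}. For $k=2$ no gradient is needed, one has exactly $\E_c\{\xi(t_1,x_1),\xi(t_2,x_2)\}=q(1-q)\,p^{(1)}_{|t_2-t_1|}(x_1,x_2)$; for larger $k$ the gradients are genuinely required.

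The main new ingredient, and the step I expect to be the main obstacle, is therefore a gradient bound on the $m$-particle exclusion transition kernel that is uniform in the lattice spacing: for each fixed $m$ and each coordinate $j$,
\[
  \bigl|\nabla_{x_j}\,p^{(m)}_t(\mathbf{x};\mathbf{y})\bigr|
  \;\lesssim\; \bigl(t^{1/2}\vee 2^{-N}\bigr)^{-1}\,\Phi^{(m)}_t(\mathbf{x};\mathbf{y}),
\]
where $\Phi^{(m)}_t$ is a Gaussian-type kernel of total mass $\mathcal{O}(1)$ and the implicit constant is allowed to depend on $m$ but not on $N$. For a single particle this is the classical local limit theorem with its discrete-gradient refinement; the difficulties are that the exclusion interaction destroys the product structure and rules out a direct appeal to continuum parabolic regularity at scales below $2^{-N}$, and that $m$, while fixed, is arbitrary, so that the $m$-dependence of all constants must be tracked. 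I would obtain the bound either by a coupling argument using the graphical (stirring) representation — coupling two copies of the $m$-particle exclusion process started from configurations differing by a single nearest-neighbour move and showing that the discrepancy disappears by time $t$ with rate $\mathcal{O}(t^{-1/2})$ — or by a Nash--De~Giorgi--Moser-type iteration for the $m$-particle generator run simultaneously with its discrete derivative, checking in either case that no constant degenerates as $N\to\infty$.

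Granting this bound, the remaining steps parallel~\cite{EH23}: insert it into each term of the expansion from the second step, perform the space-time convolutions, use the quasi-triangle inequalities to bound the diagrammatic sum by the cyclic sum on the right of \eqref{eq:cumubound}, and collect the (now $N$-independent) constants. This proves Theorem~\ref{thm:cycle}, and by the limiting argument of~\cite{EH23} also Theorem~\ref{thm:cumu}, in dimension $d=2$; together with the reduction recalled at the outset, it yields Theorem~\ref{thm} in $d=2$ with $\bar\eta\in(0,1\wedge\eta)$.
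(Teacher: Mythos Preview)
Your high-level reduction is exactly the one the paper makes: by Remarks~1.4 and~3.35 of~\cite{EH23}, both assertions of Theorem~\ref{thm:main} follow once one has a gradient estimate for the transition kernel of finitely many labelled exclusion particles, and your explanation of why the extra $t^{-1/2}$ is precisely what is missing in $d=2$ is correct.

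Where you diverge from the paper is in the proof of the gradient bound itself. You propose either a stirring-coupling argument or a Nash--De~Giorgi--Moser iteration. The paper does neither: it compares $p_t^\ell$ to the transition kernel $p_t^{rw}$ of $k$ \emph{independent} random walks via the Duhamel identity
\[
p_t^{rw}-p_t^\ell=\int_0^t P_{t-s}^\ell\,(L^{rw}-L^\ell)\,P_s^{rw}\,ds\;,
\]
observes that $L^{rw}-L^\ell$ is supported on configurations with two adjacent particles, and then feeds in (i) the elementary gradient bound for a single random walk and (ii) the on-diagonal Landim bound for $p^\ell$. This yields a total-variation gradient estimate of order $t^{-1/2}\log t$ in $d=2$, which after a semigroup splitting and geometric interpolation with the Landim pointwise bound gives \eqref{e:mainGradientBound} for any $\theta<1$. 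Note that the paper does \emph{not} obtain your sharper form with a full $t^{-1/2}$ prefactor and Gaussian envelope; it explicitly leaves open whether $\theta=1$ is attainable, but $\theta<1$ suffices for the application.

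Regarding your two proposed routes: the coupling approach is the one used in $d=1$ in~\cite{EFX23}, but the authors state that it is unclear how to make it work for $d\ge 2$, so you may be underestimating the difficulty there. The Nash--De~Giorgi--Moser route is plausible but would require setting up parabolic regularity for the $k$-particle exclusion generator with constants uniform in $N$, which is substantially heavier machinery than the paper's elementary random-walk comparison.
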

	
	\begin{remark}
Whereas in~\cite{EH23} the constant $C_N$ in Theorem~\ref{thm:main} behaves to leading order as a constant times $2^{2N}$, in $d=2$ it behaves like a multiple of $N$. Indeed, similar to~\cite[Eq.~2.31]{EH23}, the constant is given up to a finite correction by
\begin{equation}
	2^{2N}\int_0^T \sum_{x\in \Z_N^2} (p_{2^{2N}t}(x))^2\, dt = 2^{2N}\int_0^T p_{2^{2N}2t}(0)\, dt\,.
\end{equation}
The claim that this is of order $N$ follows from standard simple random walk estimates. For examples of similar equations with similar renormalisation constants we refer the reader to~\cite[Chapter 10]{Hai} and to~\cite[Section 2]{WongZakai}. However, since the structure of the constant is by now rather standard, 
we do not further comment on why it is of that specific form.
\end{remark}

\subsection{Structure of the article}

The remainder of the article is structured as follows.
In Section~\ref{s:functionspaces} we first introduce the norms and function spaces that appear in Theorem~\ref{thm}. In Section~\ref{s:proof} we then give a conditional proof of Theorem~\ref{thm:main}, 
which is subject to a gradient estimate for the symmetric simple exclusion process. This estimate, 
which constitutes the main result of this article, is introduced and proved in 
Section~\ref{S:gradient} on the full space $\Z^d$ and is adapted to the torus in Section~\ref{s:torus}.

\subsection{Discrete function spaces}\label{s:functionspaces}

For $\eta\in (0,1)$, we define discrete H\"older spaces $\cC_N^\eta(\T_N^d,\R)$ as the space of all elements $f\in\R^{\T_N^d}$, with norm 
\begin{equation}
	\|f\|_{\cC_N^\eta}\overset{\text{def}}{=} \sup_{x\in\T_N^d}|f(x)| + \sup_{x\neq y\in\T_N^d}
	\frac{|f(x)-f(y)|}{|x-y|^\eta}.
\end{equation}
Let $\eta\in (0,1)$. To compare an element $f\in\cC^\eta(\T^d,\R)$ in the usual H\"older space with an element $f^N\in \cC_N^\eta(\T_N^d,\R)$ we introduce the distance
\begin{equation}\label{eq:discreteHolder}
	\begin{aligned}
		\|f;f^N\|_{\eta}\overset{\text{def}}{=}
		&\sup_{x\in\T_N^d}|f(x)-f^N(x)| + \sup_{x\neq y\in\T_N^d}
		\frac{|(f(x)-f(y))-(f^N(x)-f^N(y))|}{|x-y|^\eta}\\
		&+\sup_{\substack{x,y\in\R^d:\, |x-y|<2^{-N}}}
		\frac{|f(x)-f(y)|}{|x-y|^\eta}.
	\end{aligned}
\end{equation}
To compare functions $f\in\cC_\s^\eta ([0,T]\times \T^d,\R)$ and $f^N\colon [0,T]\times \T_N^d \to \R$, we define a ``distance'' by
\begin{equs}
	\|f;f^N\|_{\cC_N^{\eta,T}}&\eqdef \sup_{(t,x)\in [0,T]\times \T_N^d}|f(t,x)-f^N(t,x)|+ \sup_{\substack{(t,x), (s,y)\in [0,T]\times\T_N^d\\ \|(t,x)-(s,y)\|_\s < 2^{-N}}}\frac{|f(t,x)-f(s,y)|}{\|(t,x)-(s,y)\|_\s^\eta}\\
	& + \sup_{\substack{(t,x), (s,y)\in [0,T]\times\T_N^d\\ \|(t,x)-(s,y)\|_\s \geq 2^{-N}}} \frac{|(f(t,x)-f(s,y))-(f^N(t,x)-f^N(s,y))|}{\|(t,x)-(s,y)\|_\s^\eta}.
\end{equs}
Here, the index $\s$ denotes that the space-time distance is the parabolic one, namely
$	\|(t,x)\|_{\s}= \max\{\sqrt{|t|},\|x\|\}$, where $\|\cdot\|$ denotes the usual Euclidean norm.

\section{Proof of the main result}
\label{s:proof}

In this section we prove Theorem~\ref{thm:main} subject to the gradient estimate given in Theorem~\ref{main} below. As already remarked above, it is sufficient to prove Theorem~\ref{thm:cycle} for $d=2$ to adapt the proof of Theorem~\ref{thm} to the two-dimensional setting. Fortunately, most of the work has been done already in~\cite{EH23}. As pointed out in Remarks 1.4 and 3.35 therein, the only missing ingredient to conclude the proof is an estimate on the discrete gradient of the transition probability of $k$ labelled exclusion particles. To describe this bound, we first introduce some notation used in~\cite{EH23}.  Set $\St_N=\Z_N^d=(\Z/2^N \Z)^d$ as well as $\St=\Z^d$.
Let $\cA$ be a fixed finite index set and consider particles labelled by $\cA$  starting from $|\cA|$ 
distinct sites and evolving according to the exclusion rule. More precisely,
the state space of this Markov process is $\St_N^{\cA}$, defined by
\begin{equ}
	\St_N^{\cA}= \{\x = (\x_a)_{a\in\cA}:\,\x_a\in\St_N\text{ for all }a\in\cA,\, \x_a\neq \x_b\, \text{ for all } a\neq b\}\;,
\end{equ}
and its generator is given by
\begin{equation}
	\label{eq:labelledexclusion}
	(L_\cA f)(\x) = \sum_{\{x,y\}}[f(\sigma^{x,y}\x)-f(\x)]\;,
\end{equation}
where the sum runs over unoriented bonds $\{x,y\}$ between any pair of neighbouring sites $x,y\in\St_N$.
Here, for $\x\in\St_N^\cA$ the configuration $\sigma^{x,y}\x$ is given by
\begin{equation}
	\label{eq:sigmaswapped}
	(\sigma^{x,y}\x)_a = 
	\left\{\begin{array}{ll}
		y, &\mbox{if }\x_a= x,\\
		x, &\mbox{if }\x_a=y,\\
		\x_a, &\mbox{otherwise}.
	\end{array}\right.
\end{equation}
We also introduce $\St^{\cA}$, defined analogously to $\St_N^{\cA}$.
We further write  $p_t^{\ell}(\x,\y)$ for the probability that the labelled exclusion process 
starting at $\x$ at time $0$ is at $\y$ at time $t$.

Given $\x \in \St_N^A$ and $\bar A \subset A$,
we write $\x_{\bar A} \in \St_N^{\bar A}$ for the restriction of $\x$ to $\bar A$.
Given two disjoint index sets $A, B$ and $t =(t_1, t_2)\in \R_+^2$, we also write
\begin{equ}
	P_t^{A,B}(\x,\y) = p_{t_1}^{\ell}(\x_A,\y_A)p_{t_2}^{\ell}(\x_B,\y_B)\;,\qquad \x,\y \in \St_N^{A\sqcup B}\;.
\end{equ}
Given furthermore two elements $i\in A$ and $j \in B$, we write
\begin{equ}[e:grad]
	\nabla_{i,j} P_t^{A,B}(\x,\y) = 
	\big(p_{t_1}^{\ell}(\x_A^{ij},\y_A) - p_{t_1}^{\ell}(\x_A,\y_A)\big)\big(p_{t_2}^{\ell}(\x_B^{ij},\y_B)-p_{t_2}^{\ell}(\x_B,\y_B)\big)\;,
\end{equ}
if $\|\x_i - \x_j\| = 1$ and $\nabla_{i,j} P_t^{A,B}(\x,\y) = 0$ otherwise. Here, we wrote
$\x^{ij}$ as a shorthand for $\sigma^{i,j} \x$.
With these notations, by Remarks 1.4 and 3.35 in~\cite{EH23} Theorem~\ref{thm:main} follows if we can show that, for $d=2$, there exist positive constants $\theta$ and $C=C(\theta, |A|, |B|)$ such that uniformly in $N$ for all $\x,\y\in \St_N^{A\sqcup B}$
and all $t\geq 0$,
\begin{equs}
		 |\nabla_{i,j} P_{t}^{A,B}(\x,\y)|	\leq
		&C(\sqrt{|t_1|} +\|\x_i-\y_i \| + 2^{-N})^{-\theta} (\sqrt{|t_2|} +\|\x_j-\y_j\| + 1)^{-\theta}\\
		& \times \prod_{k\in A\sqcup B}(\sqrt{|t_k|} +\|\x_k-\y_k\| + 1)^{-d}\;,
\end{equs}
where $t_k=t_1$ if $k\in A$ and $t_k=t_2$ if $k\in B$. 
By the multiplicative structure of $\nabla_{i,j} P_t^{A,B}$ it suffices to show that
\begin{equ}\label{eq:toshow}
	\begin{aligned}
|p_{t}^\ell(\x_A^{ij},\y_A) - p_{t}^\ell(\x_A,\y_A)|\lesssim (\sqrt{|t|} +\|\x_i-\y_i\| + 1)^{-\theta} \prod_{k\in A}(\sqrt{|t|} +\|\x_k-\y_k\| + 1)^{-d} \,.
\end{aligned}
\end{equ}
Note that the components of $\x$ and $\y$ indexed by $B$ do not play any role anymore. Moreover, since the estimates we are after do not depend on the structure of $A$ but only on its cardinality we can assume throughout the remainder of the article that $A$ is of the form $\cA=A=\{1,2,\ldots, k\}$ for some positive integer $k$. Furthermore, we can assume that $\x_A^{ij}$ and $\x_A$ only differ in the first component of their first coordinate. In the sequel we write $\St_N^{\cA}= \St_N^k$ and similarly for $\St^{\cA}$. 
Equation~\eqref{eq:toshow} and therefore Theorem~\ref{thm:cycle} will then follow from the following result, which is stated for all dimensions.
\begin{theorem}\label{thm:gradboundtorus}
Let $d\in \N$, $k\geq 1$, and $\theta\in(0,1)$. There exists a universal constant $C=C(\theta,k)$ such that
\begin{equ}[e:mainGradientBound]	
	\big\lvert p_{t}^{\ell}(\x,\y)\,-\,p_{ t}^{\ell}(\x+\e_{11},\y)\big\rvert\,\leq\, \frac{C}{(\sqrt{t}\,+\,\|\x-\y\|+1)^{kd+\theta}},
\end{equ}
	valid for all $\x,\y,\x+\e_{11}\in\St_N^k$, $N\in \N$ and $t\geq 0$. 
	In the above formula,
	\begin{equ}\e_{11}\,=\,((1,0,\cdots,0),(0,\cdots,0),\cdots,(0,\cdots,0))\in (\Z^d)^k.
	\end{equ}
\end{theorem}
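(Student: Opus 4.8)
The target is a uniform (in $N$ and in $t$) bound on the discrete gradient of the labelled-exclusion transition kernel that decays like $(\sqrt t + \|\x-\y\| + 1)^{-(kd+\theta)}$, i.e.\ the same decay as $p^\ell_t$ itself times a genuine gain of $\theta$ from taking a single-site difference. The natural strategy is a \emph{coupling} argument: run two copies of the labelled exclusion process, one started at $\x$ and one at $\x+\e_{11}$, under a basic coupling (shared Poisson clocks on the bonds), and write the left-hand side of~\eqref{e:mainGradientBound} as the difference of two probabilities of being at $\y$ at time $t$. Under the basic coupling the two configurations differ in at most one coordinate at all times: the ``discrepancy'' between them is carried by a single tagged particle (the first particle of the first copy sits at $\x_1$ in one system and at $\x_1+e_1$ in the other), and this discrepancy performs a random walk that is reflected off the other $k-1$ particles. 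So $|p^\ell_t(\x,\y) - p^\ell_t(\x+\e_{11},\y)|$ is controlled by the probability that, at time $t$, the discrepancy has not yet been ``absorbed'' in a way that makes the two systems agree at $\y$ — more precisely, by summing over the location of the discrepancy, which after the coupling time has no effect.

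First I would reduce the estimate, exactly as is standard for gradient bounds on heat kernels, to two separate regimes. In the regime $\sqrt t \lesssim \|\x - \y\| + 1$, the claim follows from a large-deviation / off-diagonal estimate: the probability that the labelled process travels a macroscopic distance $\|\x-\y\|$ in time $t$ is already super-polynomially small in $\|\x-\y\|^2/t$, and one does not even need the extra $\theta$. So the real content is the near-diagonal regime $\sqrt t \gtrsim \|\x-\y\| + 1$, where one must produce the factor $t^{-\theta/2}$ beyond the bare $t^{-kd/2}$ on-diagonal decay of $p^\ell_t$. Here the plan is: (i) bound $p^\ell_t(\x,\y)$ and $p^\ell_{t}(\x+\e_{11},\y)$ individually by the Gaussian-type on-diagonal bound $p^\ell_t(\x,\y) \lesssim (\sqrt t + 1)^{-kd}$ (uniformly in $N$, using e.g.\ the Nash/Carlen--Kusuoka--Stroock machinery for the labelled process, or the fact that the labelled exclusion kernel is dominated in the appropriate sense by the free kernel of $k$ random walks — such a bound is implicit in the cumulant results quoted above); (ii) gain the extra $\theta$ by an interpolation/iteration: the map $\x \mapsto p^\ell_t(\x,\y)$ is itself the solution of the backward equation, so applying the semigroup over a time interval of length $\sim t$ and using a \emph{Hölder} (rather than Lipschitz) version of the gradient estimate at intermediate times lets one bootstrap. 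A clean way to organise this is a Duhamel/perturbation identity: write the difference of the two kernels via the coupling as $\E[\,\1_{\tau > t}\,(\cdots)\,]$ where $\tau$ is the coupling time of the discrepancy particle with the cloud of other particles, and then estimate $\Prob(\tau > t)$ together with the conditional heat-kernel factor.

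The key quantitative input — and this is where I expect the real work to be — is a bound on the \emph{coupling time} of the single discrepancy. When there are no other particles, the discrepancy is a simple random walk started from a neighbour of a point, and the probability that a $d$-dimensional walk has displacement $\sim 1$ after time $t$ is $\sim t^{-1/2}$ in $d=1$ (and one gets a full extra power in each coordinate), which already gives a gain $\theta$ as soon as $d \geq 1$. The subtlety is that the discrepancy interacts with the other $k-1$ particles via the exclusion constraint (it gets reflected), so one must show that this reflection does \emph{not} destroy the gain — intuitively reflection can only help localise/smooth things, but making this rigorous uniformly in $N$ and in the (arbitrary) initial configuration $\x$ is the crux. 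The plan is to handle this by a monotonicity/domination argument: the discrepancy's motion, viewed alone, is a random walk time-changed and reflected, and one bounds its law at time $t$ by comparison with an unconstrained walk (reflection at an obstacle only decreases the probability of staying put at a \emph{fixed} target configuration, or one uses a reflection-principle decomposition). Once the discrepancy-displacement estimate $\lesssim (\sqrt t + 1)^{-\theta}$ (any $\theta < 1$, with $\theta$ arbitrarily close to $1$ only for $d=1$ but $\theta<1$ suffices and is dimension-free because even one coordinate of the walk gives it) is in hand, combining with the on-diagonal bound from step (i) via the coupling identity yields~\eqref{e:mainGradientBound}. The torus version in Section~\ref{s:torus} then follows by the usual periodisation/image-sum argument, since all estimates are uniform in $N$ and the torus kernel is the wrapped full-space kernel.
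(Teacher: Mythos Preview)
Your proposal follows a coupling strategy that is \emph{not} the route the paper takes; in fact, the authors explicitly state (in the paragraph following Remark~\ref{rem:d=1}) that the coupling argument used for $d=1$ in \cite{EFX23} does not, as far as they can see, extend to $d\ge 2$. The paper's proof instead goes through a \emph{comparison of $p^\ell$ with the independent-random-walk kernel $p^{rw}$} via the Duhamel identity~\eqref{formula}. The operator $L^{rw}-L^\ell$ is supported on configurations with two adjacent particles, which after applying $P^\ell_{t-s}$ produces an extra factor $(\sqrt{t-s}+1)^{-d}$; combining this with the elementary gradient estimate for $p^{rw}$ (Proposition~\ref{sumgradrw}) and a semigroup splitting $p^\ell_{2t}=P^\ell_t p^\ell_t$ yields the spatially uniform gradient bound (Theorem~\ref{unifbound}). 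The $\|\x-\y\|$-dependent bound~\eqref{e:mainGradientBound} then follows by geometric interpolation with the off-diagonal estimate of \cite{Landim05} --- this last step is the one part of your outline that does match the paper --- and the torus case by straightforward adaptation.

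More importantly, your argument has a concrete gap at the ``coupling time'' step. Under the basic coupling (shared bond clocks), each ring of a bond $\{x,y\}$ applies the \emph{same} transposition $\sigma^{x,y}$ to both configurations; since $\sigma^{x,y}$ is a bijection of $\St_N^k$, two distinct configurations remain distinct forever. Hence the coupling time $\tau$ you invoke is almost surely infinite, and the basic coupling yields only the trivial estimate $|p^\ell_t(\x,\y)-p^\ell_t(\x+\e_{11},\y)|\le p^\ell_t(\x,\y)+p^\ell_t(\x+\e_{11},\y)$ with no $\theta$-gain. To get any gain one would need a non-basic coupling (e.g.\ mirror-coupling particle~1), but then the remaining $k-1$ particles need not stay matched, and the monotonicity/reflection heuristics you sketch (``reflection can only help'') are not substantiated for labelled particles in $d\ge2$. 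This is precisely the obstacle that leads the authors to the analytic comparison argument instead.
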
  
Since the above result might come in handy in a context different from ours, we will prove it first for the exclusion process defined on all of $\mathbb{Z}^d$ in Section~\ref{S:gradient}. The proof of Theorem~\ref{thm:gradboundtorus} then follows from a simple adaptation of the arguments given in Section~\ref{S:gradient} and will be presented in Section~\ref{s:torus}.

\section{Gradient estimate}\label{S:gradient}
In this section we prove Theorem~\ref{thm:gradboundtorus}. We will however prove it first on the full space and then explain how to adapt its proof to the torus. With a slight abuse of notation we will from now on denote by $p^{\ell}$ the transition probability for the labelled exclusion process on $\Z^d$ (more precisely on $\St^k$ for some given integer $k$).
\begin{theorem}\label{main}
The statement of Theorem~\ref{thm:gradboundtorus} holds with $\St_N^k$
replaced by $\St^k$.
\end{theorem}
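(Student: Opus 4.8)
The goal is a gradient bound on $p_t^\ell$ for $k$ labelled exclusion particles on $\Z^d$ that decays like $(\sqrt t + \|\x-\y\|+1)^{-kd-\theta}$. The plan is to first establish the correct \emph{Gaussian-type upper bound} $p_t^\ell(\x,\y)\lesssim (1+t)^{-kd/2}\exp(-c\|\x-\y\|^2/(1+t))$ (or a polynomial surrogate thereof), which already gives the exponent $kd$, and then to extract the extra $\theta$ of decay from the \emph{spatial increment} $\x\mapsto \x+\e_{11}$. The main tool for the latter is a coupling: run two labelled exclusion processes, one started from $\x$ and one from $\x+\e_{11}$, under the basic coupling, so that they agree at every bond update and differ only in the position of the single displaced particle (label $1$, first coordinate). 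The discrepancy performs, roughly, a random walk and is "killed" (the two configurations coalesce) when the displaced particle is adjacent to another particle and a swap across that bond occurs. One then writes
\begin{equ}
  p_t^\ell(\x,\y) - p_t^\ell(\x+\e_{11},\y) = \Exp\bigl[\1_{\{X_t=\y\}} - \1_{\{\tilde X_t = \y\}}\bigr]\;,
\end{equ}
and the right-hand side is controlled by the event that the discrepancy survives up to time $t$ combined with a local CLT / heat-kernel estimate for the terminal position.

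The key steps, in order: (i) prove the heat-kernel upper bound for $p_t^\ell$ on $\Z^d$ — this should follow from Nash-type or Carlson--Landim-style moving-particle estimates, or by comparison with $k$ independent random walks after paying a bounded combinatorial price for the exclusion constraint, together with finite-range/finite-speed considerations for the short-time Gaussian tail; (ii) set up the basic coupling of the $\x$- and $(\x+\e_{11})$-processes and identify the single discrepancy and its coalescence mechanism; (iii) bound $|p_t^\ell(\x,\y)-p_t^\ell(\x+\e_{11},\y)|$ by splitting on the coalescence time $\tau$: on $\{\tau \le t\}$ the two processes are identical after $\tau$, so by the strong Markov property and step~(i) the difference is dominated by $\Exp[\1_{\{\tau\le t\}}\,\sup_{\z}(p_{t-\tau}^\ell(\z,\y)-\dots)]$ — more carefully, one keeps the difference "attached" to the discrepancy and uses that a one-step discrepancy in a heat kernel is itself of size $O((1+(t-\tau))^{-1/2})$ times the kernel; on $\{\tau > t\}$ one uses a survival-probability bound for the discrepancy together with $p_t^\ell(\x,\y)\lesssim (1+t)^{-kd/2}$ and the spatial localisation $\|\x-\y\|\lesssim \sqrt t$; (iv) interpolate: the naive estimates give either an extra $t^{-1/2}$ factor (from the discrepancy gradient when $\tau$ is of order $t$) or an extra decay in $\|\x-\y\|$ (from requiring the displaced particle to travel), and combining these via a standard interpolation lets one trade a full power for the fractional power $\theta<1$, yielding $(\sqrt t + \|\x-\y\| + 1)^{-\theta}$ on top of the $(\sqrt t + \|\x-\y\|+1)^{-kd}$ already in hand. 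One should also dispatch the regime $\sqrt t + \|\x-\y\| \lesssim 1$ trivially, since there the right-hand side of~\eqref{e:mainGradientBound} is bounded below by a constant while the left-hand side is at most $1$.

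The main obstacle, as the authors flag in the introduction, is precisely getting \emph{any} gain of regularity — the extra $\theta$ — uniformly in the number of particles $k$ and uniformly in $N$ (here: on all of $\Z^d$), because the exclusion interaction prevents one from literally differentiating the single-particle kernel: the discrepancy does not evolve autonomously, its coalescence rate depends on the (correlated) positions of all other particles, and when particles cluster the discrepancy can be "trapped" in ways that a free random walk would not be. The delicate point is therefore to show that the coalescence time $\tau$ has enough integrability — morally $\Prob(\tau > s)\lesssim s^{-1/2}$ in the relevant range, matching the recurrence of the difference walk in the effective one-dimensional picture — \emph{without} losing control as $k$ grows, and to do so while simultaneously tracking the spatial displacement so that the bound degrades correctly in $\|\x-\y\|$. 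I expect this to require a careful decomposition of the dynamics (e.g.\ conditioning on the trajectory of the "environment" seen by the discrepancy, or a duality/graphical-representation argument) rather than a soft coupling estimate, and it is the step where the bulk of the work in Section~\ref{S:gradient} will go.
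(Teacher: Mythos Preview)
Your coupling step~(ii) contains a genuine error that undermines the whole scheme. Under the ``basic'' (stirring) coupling you describe---shared Poisson clocks on bonds, both configurations applying the same swap at every ring---the two processes \emph{never} coalesce. Indeed, the stirring dynamics up to time $t$ induces a single random permutation $\pi_t$ of $\Z^d$, and the coupled pair is simply $(\pi_t\circ\x,\ \pi_t\circ(\x+\e_{11}))$; since $\pi_t$ is a bijection and $\x_1 \neq \x_1+e_1$, the first coordinates $\pi_t(\x_1)$ and $\pi_t(\x_1+e_1)$ remain distinct for all $t$. The scenario you describe---``the displaced particle is adjacent to another particle and a swap across that bond occurs''---does not produce coalescence either: running through the case shows that the discrepancy in particle~1's position is merely transported to a new pair of sites, never eliminated. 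So there is no coalescence time $\tau$, and steps~(iii)--(iv) have nothing to work with. (This is not a minor technicality: any alternative coupling that \emph{does} allow coalescence would have to break the exact stirring synchrony, and it is far from clear how to do this while retaining enough control on the joint law to carry out your conditional heat-kernel estimate.)

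The paper explicitly flags this obstruction: the coupling route was used in the one-dimensional case in~\cite{EFX23}, but the authors write that ``it is not clear to us how to use a similar argument in the higher dimensional case,'' and they take a completely different path. Their argument runs as follows. First, they interpolate the desired bound between the known heat-kernel estimate of~\cite{Landim05} (which already gives the exponent $kd$ together with the spatial decay) and a purely temporal gradient bound $|p_t^\ell(\x,\y)-p_t^\ell(\x+\e_{11},\y)|\lesssim (\sqrt t+1)^{-kd-\theta'}$ with $\theta'>\theta$, so that the spatial dependence comes entirely from~\cite{Landim05}. For the temporal bound they use the semigroup property to reduce to a total-variation estimate $\sum_\z |p_t^\ell(\x,\z)-p_t^\ell(\x+\e_{11},\z)|\lesssim (\sqrt t+1)^{-\theta}$, and this in turn is obtained by comparing $p_t^\ell$ with the transition kernel $p_t^{rw}$ of $k$ \emph{independent} random walks via a Duhamel formula $p_t^{rw}-p_t^\ell=\int_0^t P_{t-s}^\ell(L^{rw}-L^\ell)P_s^{rw}\,ds$. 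The difference $L^{rw}-L^\ell$ is supported on configurations with two neighbouring particles, which occur with probability $\lesssim (\sqrt{t-s}+1)^{-d}$, while the gradient of $p_s^{rw}$ in total variation is the trivial single-particle estimate $\lesssim (\sqrt s+1)^{-1}$; combining these and integrating in $s$ gives the claim for $d\ge 2$. No coupling of two exclusion processes is used anywhere.
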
 

\begin{remark}
	It is not completely clear to us what the optimal bound of this type should be. A naive guess based on the upper bound for $p^\ell$ obtained in~\cite{Landim05} would be that the optimal bound is \eqref{e:mainGradientBound} but with
	$\theta = 1$.
\end{remark}
\begin{remark}\label{rem:d=1}
	The above result was obtained in the one dimensional case in \cite{EFX23}. Therefore, for the remainder of the proof we will assume that $d\geq 2$.
\end{remark}
The proof of this theorem will be given in the forthcoming section. The proof in the one-dimensional case in~\cite{EFX23} uses a coupling argument between two exclusion processes started at different positions.  It is not clear to us how to use a similar argument in the higher dimensional case, which prompts us to use a different strategy. To be more precise, inspired by~\cite{Andjel2013}, we bound the difference between the transition probability of a labelled SSEP and of a system of independent random walks. Then it only remains to establish a gradient estimate of the transition probability of independent random walks.
\begin{remark}
	Throughout this article,  $C$ denotes a constant that depends only on fixed parameters and may change from line to line. However constants with subscript such as $C_i$ will be fixed throughout the article.
	\end{remark}

\subsection{Proof of Theorem~\ref{main}}\label{S:step}
The proof of Theorem~\ref{main} is based on the following spatially uniform gradient estimate.
\begin{theorem}\label{unifbound}
Let $d\in\N$.	Fix $k\geq 1$ and $\theta\in(0,1)$. There exists a universal constant $C=C(\theta,k)$ such that
	$$\big\lvert p_t^\ell(\x,\y)\,-\,p_t^\ell(\x+\e_{11},\y)\big\rvert\,\leq\, \frac{C}{(\sqrt{t}\,+\,1)^{kd+\theta}},$$
	valid for all $\x,\y,\x+\e_{11}\in\St^k$, $t\geq 0$.
\end{theorem}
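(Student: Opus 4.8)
The statement to prove is Theorem~\ref{unifbound}: a spatially uniform gradient bound $|p_t^\ell(\x,\y)-p_t^\ell(\x+\e_{11},\y)|\lesssim (\sqrt t+1)^{-(kd+\theta)}$. Note first that this already implies Theorem~\ref{main}: when $\|\x-\y\|$ is small (comparable to $\sqrt t$ or bounded), $(\sqrt t+1)^{-(kd+\theta)}$ and $(\sqrt t+\|\x-\y\|+1)^{-(kd+\theta)}$ are comparable; when $\|\x-\y\|$ is large one needs the extra spatial decay, which should come from the standard off-diagonal (Gaussian-type) decay of $p_t^\ell$ itself (available e.g. from Nash/Carne--Varopoulos estimates or from~\cite{Landim05}) applied to the two terms separately and combined with the uniform bound by interpolation. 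So the heart of the matter is the uniform-in-space estimate, and I expect that is what Section~\ref{S:step} really does.

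**Main strategy for Theorem~\ref{unifbound}.** Following the remark in the text, I would compare the labelled SSEP transition kernel $p_t^\ell$ with the transition kernel $\bar p_t$ of $k$ \emph{independent} random walks on $\Z^d$ (no exclusion constraint). For independent walks the discrete gradient bound is elementary: $\bar p_t(\x,\y) = \prod_{a=1}^k q_t(\x_a,\y_a)$ with $q_t$ the single-walk kernel, and the one-coordinate difference $|\bar p_t(\x,\y)-\bar p_t(\x+\e_{11},\y)|$ is bounded by $|q_t(\x_1,\y_1)-q_t(\x_1+e_1,\y_1)|\prod_{a\ge 2}q_t(\x_a,\y_a)$; since $\sup_{x,y}|q_t(x,y)-q_t(x+e_1,y)|\lesssim t^{-(d+1)/2}$ (a gradient on the continuous-time SRW heat kernel, gaining a full power of $t^{-1/2}$ over the on-diagonal $t^{-d/2}$) and $\sup q_t \lesssim t^{-d/2}$, one gets $\lesssim t^{-(kd+1)/2}\le t^{-(kd+\theta)/2}$ uniformly in space. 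Then I would control the \emph{difference} $p_t^\ell - \bar p_t$: write both via their forward/backward equations (generators $L_\cA$ resp. the independent-walk generator $\bar L$) and Duhamel,
\begin{equ}
p_t^\ell - \bar p_t = \int_0^t \bar p_{t-s}\,(L_\cA - \bar L)\,p_s^\ell\,ds\;,
\end{equ}
where $L_\cA-\bar L$ is supported on configurations with two particles adjacent (the exclusion constraint only ``acts'' there). Applying $\nabla_{\e_{11}}$ to this identity, moving the gradient onto the smooth factor $\bar p_{t-s}$, and using that $L_\cA-\bar L$ produces a kernel concentrated on the (lower-dimensional) collision set, I would iterate/bootstrap: each Duhamel step contributes a factor that is integrable in $s$ against the $(t-s)$-gradient of $\bar p$, ultimately yielding the claimed power. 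The collision set having codimension $d$ is what makes the time integral converge and is presumably where the precise exponent $kd+\theta$ (rather than $kd+1$) comes from — one loses a little to ensure integrability.

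**Expected main obstacle.** The delicate point is handling the gradient together with the Duhamel iteration uniformly in the number of particles $k$ and uniformly in $N$ (for the torus version). The naive Duhamel expansion of $p_t^\ell$ around independent walks does not obviously terminate, so one needs either (i) a clever a priori bound allowing a single Duhamel step plus a self-improving estimate (a bootstrap on the quantity $g(t):=\sup_{\x,\y}(\sqrt t+1)^{kd+\theta}|p_t^\ell(\x,\y)-p_t^\ell(\x+\e_{11},\y)|$, showing $g(t)\le C + \tfrac12\sup_{s\le t}g(s)$ or similar), or (ii) a probabilistic representation of the difference $p_t^\ell-\bar p_t$ in terms of the number/duration of ``exclusion interactions'' up to time $t$ (as in the Andjel-type argument cited), where one estimates the expected local time of collisions for the independent-walk system — for $d\ge 2$ pairs of walks are transient or marginally recurrent, so these local times are controlled. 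I would pursue route (ii): couple labelled SSEP and independent walks so they agree until the first collision, and recursively estimate the error each time the exclusion rule forces a discrepancy, summing a convergent series in the number of collisions. Making this coupling-plus-recursion quantitative, with the gradient carried through and with constants independent of $N$, is the technical crux; the random-walk collision/heat-kernel estimates it rests on are standard.
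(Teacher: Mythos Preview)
Your high-level strategy --- compare $p_t^\ell$ to the independent-walk kernel $p_t^{rw}$ via a Duhamel formula and exploit that $L^\ell-L^{rw}$ is supported on near-collision configurations --- matches the paper's. But you are missing the one device that makes the argument close without any bootstrap or coupling iteration.

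The paper does \emph{not} attempt to control the pointwise quantity $\nabla_{\e_{11}}(p_t^\ell-p_t^{rw})(\x,\y)$. Instead it first factorises via the semigroup property,
\begin{equ}
\big|p_{2t}^\ell(\x,\y)-p_{2t}^\ell(\x+\e_{11},\y)\big|
\le \Big(\sum_{\z\in\St^k}\big|p_t^\ell(\x,\z)-p_t^\ell(\x+\e_{11},\z)\big|\Big)\,\sup_{\w}p_t^\ell(\w,\y)\;.
\end{equ}
The supremum is $\lesssim(\sqrt t+1)^{-kd}$ by~\cite{Landim05}, so the entire problem reduces to the \emph{total-variation} estimate $\sum_{\z}|p_t^\ell(\x,\z)-p_t^\ell(\x+\e_{11},\z)|\lesssim(\sqrt t+1)^{-\theta}$. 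This in turn follows by the triangle inequality from (a) the analogous $\ell^1$ bound for $p_t^{rw}$ (your elementary pointwise step, summed over $\z$), and (b) the bound $\sum_{\y}|p_t^{rw}(\x,\y)-p_t^\ell(\x,\y)|\lesssim(\sqrt t+1)^{-1}\log(t+2)$. For (b) one writes Duhamel with $p^\ell$ on the outside and $p^{rw}$ on the inside, sums over $\y$, and observes that no derivative ever lands on $p^\ell$: the factors $p_s^{rw}(\delta^{i,j}\w,\cdot)-p_s^{rw}(\w,\cdot)$ are random-walk gradients, while $p_{t-s}^\ell(\x,\w)$ is only summed against the collision indicator $\1\{\|\w_i-\w_j\|=1\}$ using the on-diagonal bound from~\cite{Landim05}.

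The obstacle you flag --- circularity when the gradient falls on $p^\ell$ inside Duhamel --- is real for your pointwise formulation, and your proposed cures (a self-improving inequality for $g(t)$, or a collision-by-collision coupling expansion carrying the gradient through) are substantially harder to make quantitative; in particular route~(ii) does not obviously deliver the exponent $kd+\theta$. The semigroup factorisation sidesteps the issue entirely and is the missing idea.
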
 
We first show how Theorem~\ref{main} follows from Theorem~\ref{unifbound}.
By the trivial bound 
$$\big\lvert p_t^\ell(\x,\y)\,-\,p_t^\ell(\x+\e_{11},\y)\big\rvert\,\leq\,p_t^\ell(\x,\y)\,+\,p_t^\ell(\x+\e_{11},\y)$$
and \cite[Theorem 1.1]{Landim05}, there exist some positive constants $C_1,C_2$ such that
\begin{equation}\label{landimbound}
\big\lvert p_t^\ell(\x,\y)\,-\,p_t^\ell(\x+\e_{11},\y)\big\rvert\,\leq\, \frac{C_1}{(\sqrt{t}\,+\,1)^{kd}}\exp\Big\{ -\frac{C_2t}{2(\log t)^2}\Phi\Big( \frac{\|\x-\y\| \log t}{C_2^2 t} \Big)\Big\}
\end{equation}
which is valid for all $t>0$, where 
$$\Phi(u)\,:=\,\sup_{w\in\bb R}\big( uw- w^2\cosh w\big)\;.$$ 
Fix $\theta\in(0,1)$ and choose $\theta'=1-\frac{1}{2}(1-\theta)$. Applying the geometric interpolation inequality
$\min\{a,b\}\,\leq\, a^\gamma b^{1-\gamma}$
with $\gamma$ such that 
$$(1-\gamma) kd+\gamma(kd+\theta')\,=\,kd+\theta\;,$$
$a=C(\sqrt{t}+1)^{-kd-\theta'}$, and $b$ equal to the right hand side in \eqref{landimbound}, we have
\begin{equation*}
\big\lvert p_t^\ell(\x,\y)\,-\,p_t^\ell(\x+\e_{11},\y)\big\rvert\,\leq\, \frac{C_3}{(\sqrt{t}\,+\,1)^{kd+\theta}}\exp\Big\{ -\frac{C_4t}{(\log t)^2}\Phi\Big( \frac{\|\x-\y\| \log t}{C_2^2 t} \Big)\Big\}
\end{equation*}
for some positive constants $C_3, C_4$ depending on $C_1, C_2, \theta, d$ and $k$. As in the
proof of \cite[Lemma~3.5]{EH23}, we can use the fact that $\Phi(u) \eqsim u^2$ for $u \le 1$ and $\Phi(u) \gtrsim u$ for $u \ge 1$ to bound the right hand side by 
$$\frac{C}{(\sqrt{t}+\|\x-\y\|+1)^{kd+\theta}}\;.$$
This concludes the proof of Theorem~\ref{main}. 

To prove Theorem \ref{unifbound}, we use the semigroup property:
\begin{equation*}
\begin{split}
\big\lvert  p^{\ell}_{2t}(\x,\y)\,-\,p^{\ell}_{ 2t}(\x+\e_{11},\y)\big\rvert\,=&\,\sum_{\z\in\St^k}\Big\lvert \big[p_t^\ell(\x,\z)\,-\,p_t^\ell(\x+\e_{11},\z) \big] p_t^\ell(\z,\y) \Big\rvert\\
\,\leq\,&\sum_{\z\in\St^k} \big\lvert p_t^\ell(\x,\z)\,-\,p_t^\ell(\x+\e_{11},\z) \big\rvert \sup_{\w}p_t^\ell(\w,\y).
\end{split}
\end{equation*}
It follows from~\cite[Theorem 1.1]{Landim05} that 
$ \sup_{\w}p_t^\ell(\w,\y)\,\ \leq\, C (\sqrt{t}+1)^{-kd}$, so
that it remains to show that
\begin{equation}\label{TV}
\sum_{\z\in\St^k} \big\lvert p_t^\ell(\x,\z)\,-\,p_t^\ell(\x+\e_{11},\z) \big\rvert\,\leq\,\frac{C}{(\sqrt{t}+1)^\theta}.
\end{equation}
The rest of this section is dedicated to this estimate, which is a consequence of 
Proposition~\ref{sumgradrw} and Lemma~\ref{sumy} below.

\subsubsection{Estimates of independent random walks}

 Write $p_t^{rw}$ for the transition probability of $k$ independent random walks evolving on $\bb Z^d$, so that
\begin{equation}\label{ptrwdec}
	p_t^{rw}((\x_1,\cdots,\x_k),(\y_1,\cdots,\y_k))\,=\,\prod_{i=1}^k p_t(\x_i,\y_i)\;,
\end{equation}
where $p_t$ is the transition probability of the $d$-dimensional simple random walk. 
\begin{proposition}\label{sumgradrw}
There exists a constant $C=C(k,d)$ such that
\begin{equation*}
\sum_{\z\in (\bb Z^d)^k} \big\lvert p_t^{rw}(\x,\z)\,-\,p_t^{rw}(\x+\mb e_{11},\z) \big\rvert\,\leq\, \frac{C}{\sqrt{t}+1}.
\end{equation*}
\end{proposition}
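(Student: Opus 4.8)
The key point is that by the product structure \eqref{ptrwdec}, the quantity $p_t^{rw}(\x,\z) - p_t^{rw}(\x+\e_{11},\z)$ factors as $\bigl(p_t(\x_1,\z_1) - p_t(\x_1 + e_1, \z_1)\bigr)\prod_{i=2}^k p_t(\x_i,\z_i)$, where $e_1$ is the first coordinate vector in $\Z^d$. Summing over $\z$, the factors for $i \ge 2$ each sum to $1$, so the left-hand side equals $\sum_{\z_1 \in \Z^d} \bigl\lvert p_t(\x_1,\z_1) - p_t(\x_1+e_1,\z_1)\bigr\rvert$, i.e. (twice) the total variation distance between the law at time $t$ of a single $d$-dimensional simple random walk started at $\x_1$ and one started at its nearest neighbour $\x_1 + e_1$. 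Thus the whole statement reduces to the $k=1$ case, and the constant $C$ will in fact not depend on $k$.

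To bound this one-particle total variation distance by $C/(\sqrt t + 1)$, the plan is: for $t \le 1$ the bound is trivial since the left side is at most $2$, so assume $t \ge 1$. One clean route is a direct computation using the local limit theorem / Gaussian heat-kernel estimates for the simple random walk: one has $|p_t(0,z)| \lesssim t^{-d/2} e^{-c\|z\|^2/t}$ together with a gradient estimate $|p_t(0,z) - p_t(e_1,z)| \lesssim t^{-(d+1)/2} e^{-c\|z\|^2/t}$ (valid, say, via Fourier inversion: $p_t(0,z) - p_t(e_1,z) = (2\pi)^{-d}\int_{[-\pi,\pi]^d} e^{-i z\cdot\vartheta}(1 - e^{i\vartheta_1}) e^{-t\lambda(\vartheta)}\,d\vartheta$ with $\lambda(\vartheta) = \sum_j (1-\cos\vartheta_j)$, and the extra factor $|1 - e^{i\vartheta_1}| \lesssim |\vartheta_1|$ produces one extra power of $t^{-1/2}$ after the standard Laplace-type estimate). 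Summing $t^{-(d+1)/2} e^{-c\|z\|^2/t}$ over $z \in \Z^d$ gives $\lesssim t^{-(d+1)/2}\cdot t^{d/2} = t^{-1/2}$, which is the claimed bound. Alternatively, one can couple the two walks by a mirror/reflection coupling in the first coordinate (reflecting across the hyperplane $\{x_1 = 1/2\}$) and the other coordinates synchronously; then the TV distance is at most the probability that the two first-coordinate walks have not yet met, which for nearest-neighbour starting points is $\lesssim t^{-1/2}$ by the standard gambler's-ruin/recurrence estimate for the one-dimensional projection. Either argument is short.

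There is no serious obstacle here: the factorization is immediate and the single-walk TV bound is classical. The only mild care needed is in the Fourier gradient estimate — making sure the extra $|1 - e^{i\vartheta_1}|$ factor is exploited uniformly in $t \ge 1$ and that the resulting Gaussian sum is controlled by a Riemann-sum comparison — but this is entirely routine. I would present the reduction to $k=1$ explicitly and then invoke (or spend one paragraph on) the one-particle heat-kernel gradient estimate, citing the local CLT if convenient.
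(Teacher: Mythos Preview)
Your proposal is correct and follows essentially the same approach as the paper: both factor via \eqref{ptrwdec} to reduce to the single-particle total variation $\sum_{\z_1}|p_t(\x_1,\z_1)-p_t(\x_1+\e_1,\z_1)|$, then bound this by $C/(\sqrt t+1)$ using a pointwise heat-kernel gradient estimate (the paper cites \cite[Theorem~2.3.6]{LawlerLimic} plus a large deviation bound to get $|p_t(\x_1,\z_1)-p_t(\x_1+\e_1,\z_1)|\le C(\sqrt t+\|\x_1-\z_1\|+1)^{-(d+1)}$, which is equivalent to your Fourier/LCLT gradient estimate). Your alternative reflection-coupling argument is a pleasant extra that the paper does not use.
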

\begin{proof}
By \eqref{ptrwdec}, this expression equals
\begin{equ}
\sum_{\z_1\in\bb Z^d} \big\lvert p_t(\x_1,\z_1)\,-\,p_t(\x_1+e_1,\z_1) \big\rvert \prod_{i\neq 1}\sum_{\z_i\in\bb Z^d}p_t(\x_i,\z_i)
= \sum_{\z_1\in\bb Z^d} \big\lvert p_t(\x_1,\z_1)\,-\,p_t(\x_1+\e_1,\z_1) \big\rvert,
\end{equ}
where $\e_1$ is the first unit vector in $\bb Z^d$.
It follows from \cite[Theorem~2.3.6]{LawlerLimic} and a large deviation estimate that 
$$\big\lvert p_t(\x_1,\z_1)\,-\,p_t(\x_1+\e_1,\z_1) \big\rvert\,\leq\, \frac{C}{(\sqrt{t}+\|\x_1-\z_1\|+1)^{d+1}}.$$
Summing the upper bound on the right hand side over all $\z_1$, we obtain
the desired bound, thus completing the proof.
\end{proof}

\subsubsection{Comparison between \TitleEquation{p_{t}^{\ell}}{p_t^ell} and 
\TitleEquation{p_{t}^{rw}}{p_t^rw}}\label{sec:comp}

An integration by parts formula gives 
\begin{equation}\label{formula}
	p^{rw}_{t}(\mb x,\mb y)\,-\,p_t^\ell(\mb x,\mb y)\,=\,\int_0^t P_{t-s}^{\ell}(L^{rw}-L^{\ell})( P_s^{rw} \delta_{\mb y})(\mb x) \,ds\;,
\end{equation}
where $L^{rw}$ and $L^{\ell}$ are the generators of the independent random walks and the labelled exclusion process respectively and $P_t^{rw}$ / $P_t^{\ell}$ are the semigroups they generate.
 These operators act on local functions $f:(\bb Z^d)^k\to\bb R$ by
\begin{equs}
L^{\ell}f(\mb x) = &\sum_{\mb y:\|\mb y-\mb x\|=1}[f(\mb y)-f(\mb x)]\1\{\mb y\in \St^k\} \\
 &+\sum_{1\leq i< j\leq k}\1\{\|\x_i-\x_j\|=1\} [f(\sigma^{i,j}\mb x)-f(\mb x)]\1\{\mb x\in \St^k\}\,,\\
L^{rw}f(\mb x) =&\sum_{\mb y:\|\mb y-\mb x\|=1}[f(\mb y)-f(\mb x)]\,.
\end{equs}
Note that we have extended $L^\ell$ to act on functions on all of $(\bb Z^d)^k$ and not just
$\St^k$. Our specific choice of extension is natural, but other choices are possible.

The first expression is indeed equal to~\eqref{eq:labelledexclusion} when acting on functions 
that vanish outside $\St^k$ since the indicator restricts to those configurations such that 
no two particles can ever occupy the same site.

\begin{lemma}\label{expression}
	For every $\mb x,\mb y\in\St^k$,
	\begin{equs}
			p^{rw}_{t}&(\mb x,\mb y)\,-\,p_t^\ell(\mb x,\mb y)\\
&= \sum_{\w\in\St^{k}}\int_0^t  p_{t-s}^{\ell}(\mb x,\mb w)\sum_{i\neq j} \1\{\|\w_j-\w_i\|= 1\}[p_s^{rw}(\delta^{i,j}\mb w,\mb y)-p_s^{rw}(\mb w,\mb y)] ds\\
&\quad -\sum_{\w\in\St^{k}}\int_0^t  p_{t-s}^{\ell}(\mb x,\mb w)\sum_{ i<j} \1\{\|\w_i-\w_j\|=1\}[p_s^{rw}(\sigma^{i,j}\mb w,\mb y)-p_s^{rw}(\mb w,\mb y)]ds\,,
	\end{equs}
where $\delta^{i,j}\w\in(\bb Z^d)^k$ is defined by
\begin{equation*}
(\delta^{i,j}\w)_\ell=\w_\ell, \,\forall\,\,\ell\neq j, \quad \text{and} \;\;  (\delta^{i,j}\w)_j=\w_i. 
\end{equation*}
\end{lemma}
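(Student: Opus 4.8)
The plan is to start from the integration-by-parts formula~\eqref{formula} and compute its integrand explicitly. Write $g_s = P_s^{rw}\delta_{\mb y}$, so that $g_s(\w) = p_s^{rw}(\w,\mb y)$, and expand $P_{t-s}^{\ell}h(\mb x) = \sum_{\w\in\St^k} p_{t-s}^{\ell}(\mb x,\w)\,h(\w)$ with $h = (L^{rw}-L^{\ell})g_s$; note that $P_{t-s}^{\ell}$ only samples the values of $h$ on $\St^k$, so it suffices to evaluate $(L^{rw}-L^{\ell})g_s(\w)$ for $\w\in\St^k$. For such $\w$ the indicator $\1\{\w\in\St^k\}$ in the definition of $L^{\ell}$ equals $1$, so the single-particle-jump parts of $L^{rw}$ and $L^{\ell}$ differ only through the jumps that would land on an already occupied site, and one is left with $(L^{rw}-L^{\ell})g_s(\w) = \sum_{\mb z:\,\|\mb z-\w\|=1,\ \mb z\notin\St^k}[g_s(\mb z)-g_s(\w)] - \sum_{i<j}\1\{\|\w_i-\w_j\|=1\}[g_s(\sigma^{i,j}\w)-g_s(\w)]$.

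The core of the argument is the combinatorial identification of the first sum. For $\w\in\St^k$ a neighbour $\mb z$ with $\|\mb z-\w\|=1$ corresponds to moving a single particle, say the $j$-th, from $\w_j$ to $\w_j+v$ for some unit vector $v$; such a $\mb z$ lies outside $\St^k$ precisely when $\w_j+v=\w_i$ for some $i\neq j$, i.e.\ when $\|\w_i-\w_j\|=1$ and $v=\w_i-\w_j$, in which case $\mb z=\delta^{i,j}\w$. Since the coordinates of $\w$ are pairwise distinct, the map $(i,j)\mapsto\delta^{i,j}\w$ is a bijection from $\{(i,j):\,i\neq j,\ \|\w_i-\w_j\|=1\}$ onto the set of neighbours of $\w$ lying outside $\St^k$, and hence the first sum equals $\sum_{i\neq j}\1\{\|\w_i-\w_j\|=1\}[g_s(\delta^{i,j}\w)-g_s(\w)]$. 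Substituting $g_s(\cdot)=p_s^{rw}(\cdot,\mb y)$, using $\1\{\|\w_i-\w_j\|=1\}=\1\{\|\w_j-\w_i\|=1\}$, inserting $\sum_{\w\in\St^k}p_{t-s}^{\ell}(\mb x,\w)\,(\cdots)$ and integrating over $s\in[0,t]$ then produces exactly the claimed identity.

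The only analytic point is the interchange of $\int_0^t$, the sum over $\w$, and the finite inner sums over $(i,j)$: this is justified by absolute convergence, since $\sum_{\w}p_{t-s}^{\ell}(\mb x,\w)=1$, the inner sums contain at most $k(k-1)$ and $\binom{k}{2}$ terms respectively, and $0\le g_s\le 1$, so $(L^{rw}-L^{\ell})g_s$ is bounded and Fubini applies on $[0,t]\times\St^k$. I expect the main (modest) obstacle to be getting the bijection of the second paragraph exactly right — in particular the case of a particle with two occupied neighbours, where the distinct pairs $(i,j)$ and $(i',j)$ yield the distinct configurations $\delta^{i,j}\w\neq\delta^{i',j}\w$ — together with keeping track of the signs, which come out $+$ for the $\delta$-terms and $-$ for the swap terms because $L^{\ell}$ enters~\eqref{formula} with a minus sign.
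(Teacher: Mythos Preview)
Your proposal is correct and follows essentially the same route as the paper: both start from~\eqref{formula}, compute $(L^{rw}-L^{\ell})g$ on $\St^k$ to obtain the two sums (forbidden jumps and swaps), and then identify the forbidden-jump sum with $\sum_{i\neq j}\1\{\|\w_i-\w_j\|=1\}[g(\delta^{i,j}\w)-g(\w)]$ via the same combinatorial bijection. Your explicit Fubini justification is a small addition not spelled out in the paper.
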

\begin{proof}
Let $\mb w\in \St^k$, a straightforward computation shows that
	\begin{equation}\label{gendif}
		\begin{aligned}
			(L^{rw}-L^{\ell})g(\mb w)\,=\, &\sum_{\mb z:\|\mb z-\mb w\|=1}[g(\mb z)-g(\mb w)]\1\{\mb z\notin\St^k\}\\
&-\,\sum_{1\leq i< j\leq k}\1\{\|\mb w_i-\mb w_j\|=1\} [g(\sigma^{i,j}\mb w)-g(\mb w)]\,.
		\end{aligned}
	\end{equation}
Observe that $\mb w\in\St^k$, and therefore $\w_i\neq \w_j$ for all $i\neq j$. If $\mb z$ has distance $1$ to $\mb w\in\St^k$, then there exists at most one pair of coordinates $i\neq j$ such that $\z_i=\z_j$. In the case of existence of such pair, we must have $\|\w_i-\w_j\|=1$, and this is the only
way in which one can have $\mb \z\notin \St^k$. Therefore the first term on the right hand side of \eqref{gendif} vanishes unless there exists a unique pair of coordinates $i\neq j$ such that $\|\w_j-\w_i\|=1$ and such that
$\z_\ell=\w_\ell$ for all $\ell\neq j$, and $\z_j=\w_i$,
in other words such that $\z = \delta^{i,j}\w$.
We conclude that, if $\mb w\in\St^k$, then 
\begin{equ}
	\sum_{\mb z:\|\mb z-\mb w\|=1}[g(\mb z)-g(\mb w)]\1\{\mb z\notin\St^k\}
		= \sum_{i\neq j} \1\{\|\w_j-\w_i\|=1\}[g(\delta^{i,j}\mb w)-g(\mb w)]\;.
\end{equ}
This equation allows us to deduce that, if $\mb w\in\St^k$, 
\begin{equs}
(L^{rw}-L^{\ell})( P_s^{rw} f)(\mb w) 
& = \sum_{i\neq j} \1\{\|\w_j-\w_i\|=1\}\sum_{\z\in(\bb Z^d)^k}[p_s^{rw}(\delta^{i,j}\mb w,\mb z)-p_s^{rw}(\mb w,\mb z)]f(\mb z)\\
&-\sum_{i< j}\1\{\|\mb w_i-\mb w_j\|=1\} \sum_{\z\in (\Z^d)^k}[p_s^{rw}(\sigma^{i,j}\mb w,\z)-p_s^{rw}(\mb w,\z)]f(\z)\,.
\end{equs}
Choosing $f=\delta_{\mb y}$, the claim follows from this equation combined with \eqref{formula} and the fact that $p^\ell$ preserves $\St^k$.
\end{proof}

Before we formulate the next lemma we recall that by Remark~\ref{rem:d=1} we can assume that $d\geq 2$.
\begin{lemma}\label{sumy}
There exists a constant $C=C(k)$ such that for any $\x\in\St^k$
	\begin{equation}
		\sum_{\mb y\in\St^k}\big\lvert p^{rw}_{t}(\mb x,\mb y)\,-\,p_t^\ell(\mb x,\mb y)\big\rvert \,\leq\,\begin{cases}
\frac{C\log(t+2)}{\sqrt{t}+1} & \text{if} \,\, d=2,\\
\frac{C}{\sqrt{t}+1}  & \text{if} \,\, d\geq 3.\\
\end{cases}
	\end{equation}
\end{lemma}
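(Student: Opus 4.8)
The plan is to estimate the right-hand side of the identity in Lemma~\ref{expression}, which expresses $p^{rw}_t - p^\ell_t$ as a time integral against the exclusion semigroup of the ``collision terms'' $[p_s^{rw}(\delta^{i,j}\w,\y) - p_s^{rw}(\w,\y)]$ and $[p_s^{rw}(\sigma^{i,j}\w,\y) - p_s^{rw}(\w,\y)]$, both of which are only nonzero when $\|\w_i - \w_j\| = 1$. Summing the absolute value over $\y \in \St^k$ and using that $\sum_{\w} p^\ell_{t-s}(\x,\w)(\cdots) \le \sup_{\w}(\cdots)$ after pulling the sum over $\y$ inside, the problem reduces to bounding, uniformly in $\w \in \St^k$ with $\|\w_i - \w_j\|=1$ and for each pair $i \neq j$,
\begin{equ}
	\int_0^t \sum_{\y \in \St^k} \big\lvert p_s^{rw}(\w',\y) - p_s^{rw}(\w,\y) \big\rvert\, ds\;,
\end{equ}
where $\w'$ is $\w$ with one coordinate shifted by a unit lattice vector (this covers both $\delta^{i,j}$ and $\sigma^{i,j}$, since both replace $\w_j$, a neighbour of $\w_i$, by something at distance $\le 1$ from it; in fact $\sigma^{i,j}$ swaps two neighbours, which is a shift of each of two coordinates, so one splits it via the triangle inequality). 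Since $p^{rw}$ factorises over the $k$ coordinates and only one (or two) coordinate is affected, the spatial sum collapses as in Proposition~\ref{sumgradrw} to a one-particle quantity $\sum_{z \in \Z^d} |p_s(w,z) - p_s(w + \e,z)|$ for a single simple random walk.

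Next I would invoke the one-particle gradient estimate already used in the proof of Proposition~\ref{sumgradrw}, namely $|p_s(w,z) - p_s(w+\e,z)| \le C(\sqrt s + \|w - z\| + 1)^{-(d+1)}$ from~\cite[Theorem~2.3.6]{LawlerLimic} together with a large deviation bound; summing over $z \in \Z^d$ gives $\sum_z |p_s(w,z) - p_s(w+\e,z)| \le C(\sqrt s + 1)^{-1}$. The catch is that this bound is not integrable in $s$ near $+\infty$ (its integral over $[0,t]$ is of order $\sqrt t$), so integrating it naively over $[0,t]$ produces $\sqrt t$, which is far too large. The remedy is the standard one: for small $s$ one uses the gradient bound $C(\sqrt s + 1)^{-1}$, contributing $O(1)$ over $s \in [0,1]$ and $O(\log t)$ or better over $[1,t]$ only after one exploits the \emph{additional decay coming from the constraint $\|\w_i - \w_j\| = 1$}. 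This is where the exclusion structure genuinely enters: although the collision term is supported on the event that particles $i$ and $j$ are neighbours, the expression being integrated is $p^\ell_{t-s}(\x,\w)$, and one should instead sum over $\w$ first and use that $p^\ell_{t-s}(\x, \cdot)$ puts only mass $O((\sqrt{t-s}+1)^{-d})$ on any fixed $\w$ (by~\cite[Theorem~1.1]{Landim05}) while the number of relevant $\w$ with two prescribed particles adjacent, at spatial scale $\sqrt{s}$, is $O((\sqrt s + 1)^{d(k-1)} \cdot 1)$ — giving a competition between $(\sqrt{t-s})^{-d}$ near $s = t$ and the $(\sqrt s)^{-1}$ factor near $s = 0$, whose integral over $[0,t]$ is $O((\sqrt t + 1)^{-1})$ for $d \ge 3$ and $O(\log(t+2)(\sqrt t + 1)^{-1})$ for $d = 2$, matching the two cases in the statement.

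Concretely, the key steps in order are: (i) apply Lemma~\ref{expression}, take absolute values, and sum over $\y \in \St^k$, moving the $\y$-sum inside the $s$-integral and the $\w$-sum; (ii) for each collision term reduce, via the product structure \eqref{ptrwdec} and the triangle inequality for $\sigma^{i,j}$, to the single-particle quantity $\sum_{z}|p_s(w,z) - p_s(w+\e,z)| \le C(\sqrt s + 1)^{-1}$; (iii) bound the remaining $\w$-sum $\sum_{\w \in \St^k}\1\{\|\w_i-\w_j\|=1\} p^\ell_{t-s}(\x,\w)$ using the Gaussian-type on-diagonal/off-diagonal bound of~\cite[Theorem~1.1]{Landim05} for $p^\ell$, reinterpreting it as $\Prob(\|\X_i(t-s) - \X_j(t-s)\| = 1)$ for the labelled process started at $\x$, which is $O((\sqrt{t-s}+1)^{-d})$ uniformly in $\x$; (iv) combine and split the integral $\int_0^t (\sqrt s + 1)^{-1}(\sqrt{t-s}+1)^{-d}\,ds$ into $s \in [0,t/2]$ and $s \in [t/2,t]$, obtaining the claimed bound — $O(1/(\sqrt t + 1))$ in $d \ge 3$ and an extra logarithm in $d = 2$ from the region $s \in [1, t/2]$. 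The main obstacle is step (iii): one must show that the collision constraint $\|\w_i - \w_j\|=1$, when averaged against the exclusion transition probability from $\x$, really contributes the gain of $(\sqrt{t-s}+1)^{-d}$ uniformly in the (arbitrary) starting configuration $\x$; this is exactly a two-point function estimate for the labelled exclusion process, and it is here that~\cite{Landim05} is used in an essential way, with the potential subtlety that the two tagged particles' difference is not itself Markovian, so one genuinely needs the full local limit / heat kernel bound rather than a naive random-walk heuristic.
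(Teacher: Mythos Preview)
Your proposal is correct and follows essentially the same route as the paper: apply Lemma~\ref{expression}, bound the $\y$-sum of each gradient term by $C(\sqrt s+1)^{-1}$ via the product structure (the paper's \eqref{sumz1}), bound the $\w$-sum with the collision constraint by $C(\sqrt{t-s}+1)^{-d}$ using the Landim heat-kernel estimate (the paper's \eqref{claim1}, proved by inserting the pointwise bound on $p^\ell$ from \cite{Landim05} via \cite[Lemma~3.5]{EH23}), and finally split $\int_0^t (\sqrt s+1)^{-1}(\sqrt{t-s}+1)^{-d}\,ds$ at $t/2$. The only cosmetic difference is that the paper establishes \eqref{claim1} by a direct summation of the full $k$-particle kernel bound rather than by reading it as a two-point collision probability, but the content is identical.
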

\begin{proof}
First of all, as a simple consequence of Proposition \ref{sumgradrw} and the triangle inequality, there exists a universal constant $C$ independent of $\mb w$ such that, for every $\mb w$ with $\|\w_i-\w_j\|=1$ for some $i<j$,%\noteDe{Isnt this a direct consequence of Proposition~\ref{sumgradrw}?}
\begin{equation}\label{sumz1}
\sum_{\mb y\in  (\bb Z^d)^k} \big|p_s^{rw}(\bar{\mb w},\mb y)-p_s^{rw}(\mb w,\mb y)\big|\,\leq\, \frac{C}{\sqrt{s}+1},\qquad \bar{\mb w}\in\{\delta^{i,j}\mb w,\delta^{j,i}\mb w,\sigma^{i,j}\mb w\}\;.
\end{equation}

%Since these two estimate can be proved in the same way, we only give the proof of \eqref{claim1}.  Recall that $p_s(\cdot,\cdot)$ is the transition probability of a single random walker at time $s$. The expression on the left hand side of \eqref{claim1} can be written as 
%\begin{equation*}
%\sum_{\mb y\in  (\bb Z^d)^k} \big\lvert  p_s(w_i,y_j)-p_s(w_j,y_j)     \big\rvert\prod_{\ell\neq j}p_s(w_\ell,y_\ell).
%\end{equation*}
%Using that
%$$\sum_{y_\ell\in\bb Z}p_s(w_\ell,y_\ell)=1,$$
%the previous expression is equal to
%\begin{equation*}
%\sum_{y_j\in\bb Z^d} \big\lvert  p_s(w_i,y_j)-p_s(w_j,y_j)     \big\rvert.
%\end{equation*}
%Since $|w_i-w_j|=1$, the claim follows from Proposition  \ref{sumgradrw}.

We now show that there exists a constant $C=C(k)$ such that
\begin{equation}\label{claim1}
\sum_{\w\in\St^{k}}  p_{t-s}^{\ell}(\mb x,\mb w)\sum_{i<j} \1\{\|\w_j-\w_i\|= 1\}\,\leq\, \frac{C}{(\sqrt{t-s}+1)^{d}}.
\end{equation}
Indeed, by Lemma 3.5 of \cite{EH23}, we can bound the left hand side of \eqref{claim1} by 
\begin{equs}
\sum_{\w\in\St^{k}} & \frac{C}{(\sqrt{t-s}+\|\x-\w\|+1)^{kd}}\sum_{i<j} \1\{\|\w_j-\w_i\|= 1\}\\
&\leq\sum_{i<j}\frac{C}{(\sqrt{t-s}+1)^{d-1}}\sum_{\w\in\St^{k}}  \frac{ \1\{\|\w_j-\w_i\|= 1\}}{(\sqrt{t-s}+\|\x-\w\|+1)^{(k-1)d+1}}.
\end{equs}
A straightforward computation shows that the second sum in the last line is bounded by
$\frac{C}{\sqrt{t-s}+1}$, which proves \eqref{claim1}.

 In view of Lemma \ref{expression}, by \eqref{sumz1} and \eqref{claim1},  it remains to show that
\begin{equation}
\int_0^t \frac{1}{(\sqrt{t-s}+1)^d}\frac{1}{\sqrt{s}+1} ds\,\leq\, \begin{cases}
\frac{C\log (t+2)}{\sqrt{t}+1} & \text{if} \,\, d=2,\\
\frac{C}{\sqrt{t}+1}  & \text{if} \,\, d\geq 3.\\
\end{cases}
\end{equation}

We deal with this integral by dividing the integration interval into $[0,\frac{t}{2})$ and $[\frac{t}{2},t]$. Bounding $\frac{1}{\sqrt{t-s}+1}$ on $[0,\frac{t}{2})$ from above by $\frac{1}{\sqrt{t/2}+1}$, since $d\geq 2$, we have
\begin{equs}
\int_0^{\frac{t}{2}} \frac{1}{(\sqrt{t-s}+1)^d}\frac{1}{\sqrt{s}+1} ds
&\leq\frac{C}{(\sqrt{t}+1)^d}\int_0^{\frac{t}{2}}\frac{1}{\sqrt{s}+1} ds\\
&\leq\frac{C\sqrt{t}}{(\sqrt{t}+1)^d}\,\leq\,  \frac{C}{\sqrt{t}+1}. 
\end{equs}
Bounding $\frac{1}{\sqrt{s}+1}$ on $[\frac{t}{2},t]$ from above by $\frac{1}{\sqrt{t/2}+1}$ we obtain
\begin{equ}
\int_{\frac{t}{2}}^t \frac{1}{(\sqrt{t-s}+1)^d}\frac{1}{\sqrt{s}+1} ds
\leq \frac{C}{\sqrt{t}+1}\int_{\frac{t}{2}}^t\frac{1}{(\sqrt{t-s}+1)^d} ds\;,
\end{equ}
which is of the desired order of magnitude, thus concluding the proof.
\end{proof}

\subsection{Adaptation of Theorem~\ref{main} to the torus}\label{s:torus}
In this very short section we explain how to adapt the proof of Theorem~\ref{main} to the torus. More precisely, how to obtain Theorem~\ref{thm:gradboundtorus}. Analysing carefully the proof of Theorem~\ref{main} we note that it depends crucially on the bounds obtained in~\cite{Landim05} and on estimates on the transition probability of a simple random walk. The latter can be adapted without any major problems to the torus, whereas the adaptation of the results in~\cite{Landim05} has been done in~\cite{EH23}. We therefore can conclude the proof of Theorem~\ref{thm:gradboundtorus}.

\section*{Acknowledgement}
D.E.\ was supported by the National Council for Scientific and Technological Development -- CNPq via a Bolsa de Produtividade 303348/2022-4 and via a Universal Grant (Grant Number 406001/2021-9). D.E.\ and T.X.\ moreover acknowledge support by the Serrapilheira Institute (Grant Number Serra-R-2011-37582).

\bibliography{refs}
\bibliographystyle{Martin}

\end{document}